\newcommand\e\varepsilon
\newcommand\R{\mathbb R}
\newcommand\D{\mathbb D}
\renewcommand{\(}{\left(}
\renewcommand{\)}{\right)}
\newtheorem{theorem}{Theorem}[section]
\newtheorem{lemma}[theorem]{Lemma}
\newtheorem{proposition}[theorem]{Proposition}
\newtheorem{corollary}[theorem]{Corollary}
\begin{document}

\title[Blow-up for a non-local Liouville equation]{A blow-up phenomenon for a non-local Liouville-type equation}

\author{Luca Battaglia}
\address[L. Battaglia]{Dipartimento di Matematica e Fisica, Universit\`a degli Studi Roma Tre, Largo S. Leonardo Murialdo, 00146 Roma,Italy}
\email{lbattaglia@mat.uniroma3.it}

\author{Mar\'ia Medina}
\address[Mar\'ia Medina]{Departamento de Matem\'aticas,
Universidad Aut\'onoma de Madrid,
Ciudad Universitaria de Cantoblanco,
28049 Madrid, Spain}
\email{maria.medina@uam.es}

\author{Angela Pistoia}
\address[A. Pistoia]{Dipartimento SBAI, ``Sapienza" Universit\`a di Roma, via Antonio Scarpa 16, 00161 Roma,Italy}
\email{angela.pistoia@uniroma1.it}

\begin{abstract}
We consider the non-local Liouville equation
$$(-\Delta)^\frac12u=h_\e e^u-1\ \hbox{in}\ \mathbb S^1,$$
corresponding to the prescription of the geodesic curvature on the circle. We build a family of solutions which blow up, when $h_\e$ approaches a function $h$ as $\e\to0$, at a critical point of the harmonic extension of $h$ provided some generic assumptions are satisfied.

\end{abstract}

\date\today
\subjclass{Primary: 35J25. Secondary: 35B40, 35B44}
\keywords{Prescribed curvature, conformal metrics, concentration phenomena, Ljapunov-Schmidt construction}
\thanks{M. Medina was partially supported by the European Union's Horizon 2020 research and innovation programme under the Marie Sklodowska-Curie grant agreement N 754446 and UGR Research and Knowledge Transfer Fund - Athenea3i.\\ 
A. Pistoia was partially supported by Fondi di Ateneo ``Sapienza" Universit\`a di Roma (Italy).\\
M. Medina wants to acknowledge the hospitality of Universit\`a La Sapienza di Roma, where this work was carried out during a long visit in the academic year 2019--2020.}

\maketitle

\section{Introduction}

The classical Nirenberg problem consists in finding positive functions $h$ on the standard sphere $(\mathbb S^n,g_0)$ for which there exists a metric $g$ conformally equivalent to $g_0$ whose scalar curvature is equal to $h.$ In dimension $n=2$, the Nirenberg problem asks what functions can be the gaussian curvature of a conformal metric on $\mathbb S^2.$ It can also be rephrased in terms of solutions of a partial differential equation on the sphere. More precisely, one looks for functions $h$ on $\mathbb S^2$ for which there exists a solution $u:\mathbb S^2\to\mathbb R$ of the Liouville equation
\begin{equation}\label{e1}-\Delta_{\mathbb S^2}u=he^{2u}-1\quad \hbox{in}\; \mathbb S^2.
\end{equation}
Indeed a straightforward computation shows that the gaussian curvature of the conformal metric $g=e^{2u}g_0$ is nothing but the prescribed function $h.$
Recently, Da Lio, Martinazzi and Rivi\'ere in \cite{dmr} investigated the case $n=1$. They parametrize a planar Jordan curve (i.e. a continuous closed and simple curve) through the trace of the Riemann mapping between the disk $\D$ and the simply connected domain enclosed by the curve and find an equation similar to \eqref{e1}
\begin{equation}\label{e2}
(-\Delta)^\frac12u=he^u-1\quad\hbox{in}\;\mathbb S^1,
\end{equation}
whose solutions give the curvature density $he^u d\theta$ of the curve in this parametrization. Here $(-\Delta)^\frac12$ is the $\frac12$-fractional Laplacian in $\mathbb S^1$, i.e.
$$(-\Delta)^\frac12u(z)=\frac1\pi\hbox{p.v.}\int_{\mathbb S^1}\frac{u(z)-u(w)}{|z-w|^2}dw.$$
Problem \eqref{e2} is equivalent, up to a constant factor 2, to 
\begin{equation}\label{e3}\left\{\begin{array}{ll}-\Delta u=0&\hbox{in}\;\D,\\\partial_\nu u+2=2he^\frac u2&\hbox{on}\;\partial\D=\mathbb S^1,\end{array}\right.
\end{equation}
where $\nu$ is the outward-pointing normal derivative at the boundary. Problem \eqref{e3} corresponds to the geometric problem of finding a flat metric $g$ on the disk $(\D,g_0)$ such that $h$ is the geodesic curvature of $\mathbb S^1$ with respect to the metric $g=e^{2u}g_0$ which is pointwise conformal to $g_0$.\\

Necessary conditions on $h$ to solve \eqref{e2} or \eqref{e3} are easily obtained. Indeed integrating \eqref{e3} we get
\begin{equation}\label{n1}\int\limits_{\mathbb S^1}he^\frac u2d\sigma_{g_0}=2\pi,\hbox{which implies}\ \max\limits_{\mathbb S^1}h>0.\end{equation}
As far as we know there are few results about existence and multiplicity of solutions of \eqref{e2} or \eqref{e3}. The first one seems due to Chang and Liu \cite{cl}, who proved the existence of a solution to \eqref{e3} provided $h$ is positive, has only isolated critical points, $\widehat h'(z)\ne0$ whenever $h(z)=0$ where $\widehat h$ denotes the complex conjugate function of $h$, and a further relation between local maxima and local minima of $h$ holds true. Later, Liu and Huang in \cite{lh} considered the case where $h$ possesses symmetry and they found a solution to \eqref{e3} if $h$ has a minimum point $z_0$ which satisfies $\partial_\nu H(z_0)>0$, where $H$ is the harmonic extension of $h$ to all of $\D,$ i.e.

\begin{equation}\label{he}\left\{
\begin{array}{ll}-\Delta H=0&\hbox{in}\;\D,\\H=h&\hbox{on}\;\mathbb S^1.\end{array}\right.\end{equation}

Finally, Zhang \cite{z} employed a negative gradient flow method to build a solution to \eqref{e3} when the necessary condition in \eqref{n1} is satisfied. The existence issue is strictly related to the study of the blow-up phenomenon. Recently Jevnikar, L\'opez-Soriano, Medina and Ruiz in \cite{jlmr} proved that if $u_n$ is a sequence of blow-up solutions of \eqref{e2} or \eqref{e3} with $h$ replaced by $h_n,$ $h_n$ is uniformly bounded in $C^2\(\mathbb S^1\)$ and $e^{u_n}$ is uniformly bounded in $L^1\(\mathbb S^1\)$, then $u_n$ blows-up at a unique point $p\in\mathbb S^1$ such that $h(p)>0$ and $\nabla H(p)=0$ where $H$ is the harmonic extension of $h$ (see \eqref{he}). See also \cite{gl} for results concerning the blow-up analysis for problem \eqref{e2}.\\

The goal of this article is to provide the first example of blow-up phenomenon for the problems \eqref{e2} and \eqref{e3}. To do so we will construct a family of solutions to some approximated problems, with $h$ perturbed as a function $h_\e\to h$ in $C^1\(\mathbb S^1\)$ as $\e\to0$, which concentrate at one point as $\e\to0$. In particular, our result also gives the first multiplicity result to \eqref{e2}/\eqref{e3}. More precisely, consider 
\begin{equation}\label{heps}
h_\e(z):=h(z)+\e k(z),
\end{equation}
with $\e>0$ a small parameter, $h\in C^{2,\alpha}\(\mathbb S^1\)$ for some $\alpha>0$ and $k\in C^1\(\mathbb S^1\)$, and the problem
\begin{equation}\label{pf}
(-\Delta)^\frac12u=h_\e(z)e^u-1\quad\hbox{in}\;\mathbb S^1.
\end{equation}
or equivalently
\begin{equation}\label{p}\left\{\begin{array}{ll}-\Delta u=0&\hbox{in}\;\D,\\\partial_\nu u+2=2 h_\e(z)^\frac u2&\hbox{on}\;\mathbb S^1.\end{array}\right.
\end{equation}
We will build a family of solutions to \eqref{pf} or \eqref{p} which blow-up at any point $\xi_0\in\mathbb S^1$ around which $h$ satisfies suitable conditions. For sake of simplicity, we will assume that  $\xi_0=1$. According to the blow-up analysis performed in \cite{jlmr} we need to assume
\begin{equation}\label{h1}
h(1)>0\quad \hbox{and}\quad h'(1)=(-\Delta)^\frac12h(1)=0,
\end{equation}
where $h'$ stands for the tangential derivative of $h$. We also require the following non-degeneracy condition at the point $1$,
\begin{equation}\label{nondeg}
\(h''(1)-\frac2{\pi^2}Q(h){1\over h(1)}\)h''(1)+\((-\Delta)^\frac12h'(1)\)^2\ne0,
\end{equation}
where
\begin{equation}\label{q}
Q(h):=\int_{\mathbb S^1\times\mathbb S^1}\log\frac1{|z-w|}\frac{h(z)-h(1)}{|z-1|^2}\frac{h(w)-h(1)}{|w-1|^2}dwdz.
\end{equation}
This condition is not restrictive since if the equality holds we can replace $h(z)$ with $h(z)+c$ being $c$ a small constant and \eqref{nondeg} holds provided $c$ is small enough. We need $h''$ to be H\"older continuous in order for $Q(h)$ to appear in the main term in the energy expansion (see Propositions \ref{E1}, \ref{E2}).

A similar argument shows that, for the sake of simplicity, we can also assume the perturbative term $k$ to vanish at the point $1,$ i.e.
$k(1)=0.$
Finally, we assume the transversality condition
\begin{equation}\label{cond}
h''(1)(-\Delta)^\frac12k(1)-k'(1)(-\Delta)^\frac12h'(1)\ne0.\end{equation}
This is quite natural, since a simple computation shows that \eqref{cond} is equivalent to requiring that the vectors $\partial_{\xi_2}\nabla_\xi\(H+\e K\)|_{\xi=1,\e=0}$ and $\partial_\e\nabla_\xi\(H+\e K\)|_{\xi=1,\e=0}$ are not parallel. Here $H$ and $K$ are the harmonic extensions of $h$ and $k$, respectively.\\

Our main result reads as

\begin{theorem}\label{main}
Assume $h\in C^{2,\alpha}\(\mathbb S^1\)$ and $k\in C^1\(\mathbb S^1\)$. Suppose that $\xi_0=1\in\mathbb S^1$ satisfies \eqref{h1} and \eqref{nondeg}. If \eqref{cond} holds true, then there exists $\e_0>0$ such that, for every $\e\in(0,\e_0)$ or for every $\e\in(-\e_0,0)$ there exists a solution $u_\e$ of 
$$(-\Delta)^\frac12u=h_\e(z) e^u-1\quad\hbox{in}\;\mathbb S^1,$$
blowing-up at $\xi_0=1$ as $\e\to0$, with $h_\varepsilon$ defined at \eqref{heps}.

Furthermore, there exist $\delta_\e>0$ and $\xi_\e\in\mathbb S^1$ with
$\delta_\e=O(\e)$ and $\xi_\e=1+O(\e)$
such that
$$u_\e\(f_{\delta_\e,\xi_\e}(z)\)+\log\left|f'_{\delta_\e,\xi_\e}(z)\right|+\log h(1)=O\(\e\)\ \hbox{in}\ L^p\(\mathbb S^1\)\quad\forall p\in [1,+\infty),$$
where $f_{\delta,\xi}$ is the conformal map 
\begin{equation}\label{confo}f=f_{\delta,\xi}(z):=\frac{z+(1-\delta)\xi}{1+(1-\delta)\overline\xi z}.\end{equation}
\end{theorem}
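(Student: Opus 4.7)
The plan is to perform a Lyapunov--Schmidt finite-dimensional reduction, working with the harmonic-extension formulation \eqref{p} so that the problem becomes a local Neumann problem on the disk. The natural family of approximate solutions consists of the M\"obius bubbles
$$
W_{\delta,\xi}(z):=-\log\bigl|(f^{-1}_{\delta,\xi})'(z)\bigr|-\log h(1),
$$
obtained by pushing forward, through the conformal map $f_{\delta,\xi}$, the unique (modulo the group of M\"obius automorphisms of the disk) solution of the limit problem with the non-constant weight replaced by the constant $h(1)$. I would look for $u_\e$ of the form $u_\e= W_{\delta,\xi}+\psi_{\delta,\xi,\e}+\phi$, where $\psi_{\delta,\xi,\e}$ is an explicit correction absorbing the linear and quadratic Taylor data of $h_\e$ at $\xi$ (needed to reduce the size of the residual), and $\phi$ is a small remainder to be determined.

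The first step is the infinite-dimensional reduction: freeze $(\delta,\xi)$ near $(0,1)$ and solve for $\phi$. The linearization of \eqref{p} at $W_{\delta,\xi}$ is a Fredholm operator of index zero whose kernel is two-dimensional and spanned by $\partial_\delta W_{\delta,\xi}$ and $\partial_\tau W_{\delta,\xi}$ (the tangential derivative along $\xi\in\mathbb{S}^1$), corresponding to the two infinitesimal generators of the bubble family. Standard weighted estimates combined with a contraction argument, by now routine for Liouville-type problems, produce a unique small $\phi=\phi_{\delta,\xi,\e}$ solving the equation projected orthogonally to the kernel, together with two Lagrange multipliers $c_0(\delta,\xi,\e)$ and $c_1(\delta,\xi,\e)$. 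The size of $\phi$ is controlled by the residual of the approximate solution, which is $o(\e+\delta)$ in a suitable norm thanks to the correction $\psi_{\delta,\xi,\e}$.

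The remaining step is to choose $(\delta_\e,\xi_\e)$ so that $c_0=c_1=0$. Testing the projected equation against $\partial_\delta W$ and $\partial_\tau W$ and expanding in $\e$, $\delta$ and $\xi-1$ yields
$$
c_j(\delta,\xi,\e)=A_{j1}\,\delta+A_{j2}(\xi-1)+D_j\,\e+o\bigl(\delta+|\xi-1|+\e\bigr),\qquad j=0,1,
$$
the $O(1)$ term being absent precisely because of \eqref{h1}. An explicit computation identifies the entries of $(A_{ij})$ and $(D_j)$ in terms of $h(1)$, $h''(1)$, $(-\Delta)^{1/2}h'(1)$, $Q(h)/h(1)$, $k'(1)$ and $(-\Delta)^{1/2}k(1)$, and shows that \eqref{nondeg} is equivalent, up to a non-zero factor, to the non-degeneracy of $(A_{ij})$, while \eqref{cond} is equivalent to the non-vanishing of the first component of $(A_{ij})^{-1}(D_0,D_1)^\top$. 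Rescaling $\delta=\e s$, $\xi=1+\mathrm{i}\e t$ and dividing by $\e$ reduces the problem to a non-degenerate $2\times 2$ linear system in $(s,t)$, to which the implicit function theorem applies, yielding $\delta_\e=\e s_0+o(\e)$ and $\xi_\e=1+\mathrm{i}\e t_0+o(\e)$ with $s_0\ne 0$. The sign of $s_0$ then selects the admissible sign of $\e$ for which $\delta_\e>0$, accounting for the either/or in the statement.

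The main technical obstacle is computing the entries $A_{ij}$ to the precision required by \eqref{nondeg}. Indeed, the nonlocal quantity $Q(h)$ in \eqref{q} appears only at logarithmic order in the energy expansion, as a double integral of the singular kernel $\log|z-w|^{-1}$ against the second-order Taylor data of $h$ at $1$; extracting it forces one to push the ansatz $\psi_{\delta,\xi,\e}$ past the naive affine correction and to handle the convolution with the half-Laplacian at logarithmic scale. This is precisely where the $C^{2,\alpha}$ regularity hypothesis on $h$ enters, as it is what allows uniform control of the corresponding double integrals and makes $Q(h)$ appear cleanly in the main term, as already anticipated in the paragraph preceding the theorem through Propositions \ref{E1} and \ref{E2}.
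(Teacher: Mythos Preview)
Your proposal is correct in its overall architecture and identifies the same key ingredients as the paper: the Lyapunov--Schmidt reduction around the M\"obius bubbles, an explicit correction to the ansatz needed to make the nonlocal term $Q(h)$ emerge, the $2\times 2$ reduced system whose non-degeneracy is \eqref{nondeg} and whose transversality is \eqref{cond}, and the role of the $C^{2,\alpha}$ hypothesis. There are, however, two organizational differences worth noting.

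First, the paper does not work in the original variables with weighted norms. Instead it performs the conformal pullback $v(z)=u(f_{\delta,\xi}(z))+2\log|f'_{\delta,\xi}(z)|$ at the outset, so that the leading approximate solution becomes the \emph{constant} $V=-\log h(\xi)$, the correction $W$ is the explicit Green convolution \eqref{Wbox}, and all estimates take place at unit scale; this replaces your weighted linear theory by a clean invertibility statement on $\mathbf H_\xi$. In that normalization the projections are not linear but of the form $c_i=-16h(1)\,\delta\bigl(\mathfrak a_{i1}\delta+\mathfrak a_{i2}\eta+\mathfrak b_i\e\bigr)+\text{h.o.t.}$ (Corollary~\ref{mainorder}); the extra factor $\delta$ is exactly what your $\partial_\delta W_{\delta,\xi}$ normalization would divide out, so the reduced $2\times2$ system is the same as yours after the rescaling $\delta=d\e$, $\eta=s\e$.

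Second, the paper carries a \emph{third} free parameter $\tau$ (a constant shift in the ansatz \eqref{vwtau}) and a third multiplier $c_0$, because it solves the linear problem in the zero-average space $\mathbf H_\xi$, where solvability forces $\int d=0$ in addition to $\int d\,\mathcal Z_i=0$. The pair $(\tau,c_0)$ decouples from the main $2\times2$ system (Proposition~\ref{E0}, Corollary~\ref{mainorder}) and is eliminated at the very end. Your two-parameter scheme can be made to work as well, but then you must argue directly that the mass constraint $\int_{\mathbb S^1}h_\e e^u=2\pi$ is absorbed by $\phi$ rather than obstructing the fixed point; the paper's extra $\tau$ is precisely the device that makes this step transparent.
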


This theorem completes our previous work \cite{bmp}, where we studied the problem of prescribing the gaussian and geodesic curvatures for a conformal metric on the unit disk, which turns out to be equivalent to solve the problem
\begin{equation}\label{kh}
\begin{cases}-\Delta u=2K(z)e^u&\hbox{in}\;\D,\\
\partial_\nu u+2=2h(z)e^\frac u2&\hbox{on}\;\mathbb S^1,\end{cases}
\end{equation}
where $K,h$ are the prescribed curvatures. The reader can find an exhaustive list of references concerning this problem in \cite{bmp}. In particular, there we built a family of conformal metrics with curvatures $K_\e,h_\e$ converging to $K,h$ respectively as $\e$ goes to $0$, which blows up at one boundary point under some generic assumptions. The strategy we follow in the present paper is similar, but it requires some careful estimates of the error term. 

Briefly, we consider the conformal map given in \eqref{confo} with $\delta=\delta_\e\to_{\e\to0}0$ and $\xi=\xi_\e=e^{\imath\eta_\e}\to_{\e\to0}1\in\mathbb S^1$ so that $(1-\delta)\xi\in\D$. 
Thus, letting $v(z):=u(f(z))+2\log|f'(z)|,$ we rewrite problem \eqref{pf} as
\begin{equation}\label{probv}(-\Delta)^\frac12v=h_\e(f(z))e^v-1\quad\hbox{in}\;\mathbb S^1,\end{equation}
and using a Ljapunov-Schmidt procedure we find a solution of \eqref{probv} as
\begin{equation}\label{vwtau}
v(z)\sim V(z)+W(z)+\tau,
\end{equation}
where the first order term is just a constant
\begin{equation}\label{Vbox}\boxed{V(z):=V_\xi(z)\equiv-\log h(\xi),}\end{equation}
solving
\begin{equation}\label{V}(-\Delta)^\frac12V=h(\xi)e^V-1\quad\hbox{on}\;\mathbb S^1.
\end{equation}
Non-trivial solutions to \eqref{V} have been classified by Ou \cite{ou} and Zhang \cite{zhang}.
The second order term, which is the key of the ansatz, is defined as

\begin{equation}\label{Wbox}
\boxed{W(z):=W_\xi(z)=\frac1\pi\int\limits_{\mathbb S^1}\log\frac1{|z-w|}(h(f(w))-h(\xi))e^Vdw,}
\end{equation}
solving
$$(-\Delta)^\frac12W=(h(f(z))-h(\xi))e^V-\frac1{2\pi}\int\limits_{\mathbb S^1}(h(f(w))-h(\xi))e^Vdw\quad\hbox{in}\;\mathbb S^1;$$
finally, $\tau$ is a small constant.\\

Regarding the case considered in \cite{bmp}, the construction of the solutions needs to be more careful. Actually, the non-degeneracy condition corresponding to \eqref{nondeg} for problem \eqref{kh} involves the second-order derivatives of the interior curvature $K$ (see (7) and (8) in \cite{bmp}), excluding the case $K\equiv0$ considered here.

Quite surprisingly, assumptions in Theorem \ref{main} look simpler and more natural compared to \cite[Theorem 1.1]{bmp}.
In particular, the non-degeneracy condition \eqref{nondeg} involves not only the second derivatives of $h$, but also the quadratic form $Q(h)$ defined in \eqref{q}, which has an interesting interpretation. As a first thing, we notice that the function
$$\widehat h(z):=\frac{h(z)-h(1)}{|z-1|^2}$$
is bounded and has zero average, due to \eqref{h1}; therefore, we may consider the (zero-average) solution $\widetilde h$ to
$$(-\Delta)^\frac12\widetilde h=\widehat h\quad\hbox{in}\;\mathbb S^1.$$
Using Green's representation, we can write
$$Q(h)=\int_{\mathbb S^1}\widehat h(z)\(\int_{\mathbb S^1}\log\frac1{|z-w|}\widehat h(w)dw\)\,dz=\int_{\mathbb S^1}\widehat h(z)\widetilde h(z)=\int_{\mathbb S^1}(-\Delta)^\frac12\widetilde h(z)\widetilde h(z).$$
In particular, since $(-\Delta)^\frac12$ is a positive definite operator, then $Q(h)$ is a positive definite quadratic form and, since $h\not\equiv0$, we get $Q(h)>0$.\\

The plan of the paper is as follows: in Section 2 we provide crucial estimates for the main term $W$ in the ansatz; in Section 3 we develop the linear theory and solve the auxiliary fixed-point problem for $\phi$; in Section 4 we evaluate the projections on the kernels of the linearized operator and we conclude the proof of Theorem \ref{main}.

\medskip

\section{Ansatz and error estimates}

We look for a solution of \eqref{probv} as
$ v(z)=V(z)+W(z)+\tau+\phi(z), $
where $V$ and $W$ are defined in \eqref{Vbox} and \eqref{Wbox}, $\tau=\tau_\e\underset{\e\to0}\to0$ is a constant and $\phi(z)=\phi_{\xi,\delta,\tau}(z)$ is a small function to be found. 

In fact, we want to find $\xi,\delta,\tau$ such that $\phi$ solves
$$(-\Delta)^\frac12(V+W+\tau+\phi)=h_\e(f(z))e^{V+W+\tau+\phi}-1\quad\hbox{in}\;\mathbb S^1$$
that is
\begin{eqnarray*}
(-\Delta)^\frac12\phi-h(\xi)e^V\phi&=&\(h_\e(f(z))e^{W+\tau}-h(f(z))\)e^V+\frac1{2\pi}\int\limits_{\mathbb S^1}(h(f(w))-h(\xi))e^Vdw\\
&+&\(h_\e(f(z))e^{W+\tau}-h(\xi)\)e^V\phi\\
&+&h_\e(f(z))e^{V+W+\tau}\(e^\phi-1-\phi\)\quad\hbox{in}\;\mathbb S^1.
\end{eqnarray*}
This can be rewritten as
\begin{equation}\label{eqphi}
\mathcal L_0\phi=\mathcal E+\mathcal L\phi+\mathcal N(\phi)\quad\hbox{in}\;\mathbb S^1,
\end{equation}
with
\begin{eqnarray}
\nonumber\mathcal L_0\phi&:=&(-\Delta)^\frac12\phi-h(\xi)e^V\phi,\\
\label{ebdr}\mathcal E&:=&\(h_\e(f(z))e^{W+\tau}-h(f(z))\)e^V+\frac1{2\pi}\int\limits_{\mathbb S^1}(h(f(w))-h(\xi))e^Vdw,\\
\nonumber\mathcal L\phi&:=&\(h_\e(f(z))e^{W+\tau}-h(\xi)\)e^V\phi,\\
\nonumber\mathcal N(\phi)&:=&h_\e(f(z))e^{V+W+\tau}\(e^\phi-1-\phi\).
\end{eqnarray}

The following two auxiliary results will be useful along the paper, and can be found at \cite[Proposition 7.1]{bmp} and \cite[Proposition 7.2]{bmp} respectively.

\begin{proposition}\label{f}
Let $\xi\in\mathbb S^1$. For any $z\in\mathbb S^1$ one has
$$|f(z)-\xi|=O\(\frac\delta{\delta+|z+\xi|}\),$$
and in particular
$$\|f(z)-\xi\|_{L^p}=\begin{cases}O\(\delta\log\frac1\delta\)&p=1,\\O\(\delta^\frac1p\)&p>1.\end{cases}$$
Moreover, if $z\in\mathbb S^1$ and $h\in C^2\(\mathbb S^1\)$, then
$$h(f(z))-h(\xi)=\delta h'(\xi)\Theta(z)+O\(\frac{\delta^2}{(\delta+|z+\xi|)^2}\),$$
with
$$\Theta(z)=\Theta_{\delta,\xi}(z):=\frac{2\left\langle z,\xi^\perp\right\rangle}{1+(1-\delta)^2+2(1-\delta)\langle z,\xi\rangle}.$$
\end{proposition}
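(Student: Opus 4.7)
The strategy is a direct computation that reduces everything to one key algebraic identity. First, using $|\xi|=1$, a short manipulation of the M\"obius map $f$ yields
$$f(z)-\xi=\frac{\delta(z-\xi)}{1+(1-\delta)\overline\xi z}.$$
The denominator is then controlled by the identity
$$|1+(1-\delta)\overline\xi z|^2=\delta^2+(1-\delta)|z+\xi|^2,$$
which follows by expanding the left-hand side and using $|z+\xi|^2=2+2\operatorname{Re}(\overline\xi z)$. This immediately gives $|1+(1-\delta)\overline\xi z|\gtrsim\delta+|z+\xi|$, and combined with $|z-\xi|\leq 2$ produces the pointwise bound $|f(z)-\xi|=O\(\frac{\delta}{\delta+|z+\xi|}\)$.

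For the $L^p$ bounds I parametrize $z=\xi e^{i\theta}$ with $\theta\in[-\pi,\pi]$, so $|z+\xi|=2|\cos(\theta/2)|$ vanishes linearly at $\theta=\pm\pi$. The integral of $\(\delta/(\delta+|z+\xi|)\)^p$ therefore reduces to the one-dimensional tail $\int_0^c\(\frac{\delta}{\delta+s}\)^p\!ds$, which equals $O(\delta\log(1/\delta))$ for $p=1$ and $O(\delta)$ for $p>1$, yielding the stated $\|f(z)-\xi\|_{L^p}$ after taking $p$-th roots.

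For the expansion of $h(f(z))-h(\xi)$, I write $f(z)=\xi e^{i\psi(z)}$ so that $\sin\psi(z)=\operatorname{Im}(f(z)\overline\xi)$. Setting $\overline\xi z=e^{i\theta}$ and clearing the denominator in $f(z)\overline\xi=(e^{i\theta}+(1-\delta))/(1+(1-\delta)e^{i\theta})$, the imaginary part simplifies exactly to
$$\operatorname{Im}(f(z)\overline\xi)=\frac{\delta(2-\delta)\sin\theta}{\delta^2+(1-\delta)|z+\xi|^2}=\delta\Theta(z)-\frac{\delta^2}{2}\Theta(z).$$
Since $|\Theta(z)|=O\(1/(\delta+|z+\xi|)\)$ (using $|\sin\theta|\leq|z+\xi|$) and $\psi=O\(\delta/(\delta+|z+\xi|)\)$ is uniformly small, inverting the sine yields $\psi(z)=\delta\Theta(z)+O\(\delta^2/(\delta+|z+\xi|)^2\)$. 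A Taylor expansion of $h\in C^2(\mathbb S^1)$ in arclength then gives $h(f(z))-h(\xi)=h'(\xi)\psi(z)+O(\psi(z)^2)$, and substituting the expansion of $\psi$ concludes.

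The only delicate point is verifying that every subleading term really fits into the target $O\(\delta^2/(\delta+|z+\xi|)^2\)$ near the antipode $z=-\xi$, where both $|\sin\theta|$ and $|z+\xi|$ vanish simultaneously. This is handled cleanly by the key denominator identity, which provides the uniform lower bound $|1+(1-\delta)\overline\xi z|\gtrsim\delta$ and keeps $\delta+|z+\xi|\leq 3$ bounded above, ensuring the error quantities never blow up regardless of how $z$ approaches the antipode.
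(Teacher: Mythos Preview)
Your argument is correct. The paper itself does not prove this proposition but simply cites \cite[Proposition~7.1]{bmp}, so there is no in-paper proof to compare against; your direct computation via the identity $f(z)-\xi=\delta(z-\xi)/(1+(1-\delta)\overline\xi z)$ together with the denominator formula $|1+(1-\delta)\overline\xi z|^2=\delta^2+(1-\delta)|z+\xi|^2$ is the natural route.

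One phrasing to tighten: $\psi$ is \emph{not} uniformly small --- indeed $f(-\xi)=-\xi$, so $\psi(-\xi)=\pi$ --- and hence ``inverting the sine'' is only literally valid where $|z+\xi|\gg\delta$, which forces $|\psi|$ small. In the complementary region $|z+\xi|\lesssim\delta$ the target error $O\(\delta^2/(\delta+|z+\xi|)^2\)$ is $O(1)$, and since both $|\psi|\le\pi$ and $|\delta\Theta(z)|=O(1)$ there, the estimate $\psi=\delta\Theta+O\(\delta^2/(\delta+|z+\xi|)^2\)$ is trivially satisfied. Your final paragraph gestures at this, but stating the dichotomy explicitly would make the sine-inversion step airtight. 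With that said, the bound $|\psi|\le\frac\pi2|f(z)-\xi|=O\(\delta/(\delta+|z+\xi|)\)$ does hold globally (via $|f(z)-\xi|=2|\sin(\psi/2)|\ge\frac2\pi|\psi|$), and combined with $|\psi-\sin\psi|\le|\psi|^3/6\le|\psi|^2\cdot\pi/6$ the claimed remainder follows uniformly, so the argument is sound either way.
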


\begin{proposition}\label{intFracLap}
Given $\xi\in\mathbb S^1$,
$$\int\limits_{\mathbb S^1}(h(f(z))-h(\xi))dz=-2\pi\delta(-\Delta)^\frac12h(\xi)+O\(\delta^2\).$$
\end{proposition}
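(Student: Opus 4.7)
The plan is to recognize that the integral is essentially a Poisson integral of $h$ evaluated at an interior point. The conformal map $f_{\delta,\xi}$ is a Möbius automorphism of $\D$ that sends $0$ to $(1-\delta)\xi$, and a direct computation gives $|f'(z)|=\frac{1-(1-\delta)^2}{|1+(1-\delta)\overline\xi z|^2}$ for $z\in\mathbb S^1$. Performing the substitution $w=f(z)$ and using that for $w\in\mathbb S^1$ one has $|1-(1-\delta)\overline\xi w|^2=|w-(1-\delta)\xi|^2$, I would rewrite
$$\int_{\mathbb S^1}(h(f(z))-h(\xi))\,dz=\int_{\mathbb S^1}(h(w)-h(\xi))\,\frac{1-(1-\delta)^2}{|w-(1-\delta)\xi|^2}\,dw=2\pi\bigl(H((1-\delta)\xi)-h(\xi)\bigr),$$
where $H$ is the harmonic extension of $h$ and I have recognized the Poisson kernel together with the normalization $\int_{\mathbb S^1}\frac{1-(1-\delta)^2}{2\pi|w-(1-\delta)\xi|^2}dw=1$.

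Next, using $h(\xi)=H(\xi)$ together with the fact that $h\in C^{2,\alpha}(\mathbb S^1)$ forces $H\in C^{2,\alpha}(\overline\D)$ by boundary Schauder estimates, I would Taylor-expand $H$ along the inward radial segment from $\xi$ to $(1-\delta)\xi$ to obtain
$$H((1-\delta)\xi)-H(\xi)=-\delta\,\partial_\nu H(\xi)+O(\delta^2),$$
with $\partial_\nu=\partial_r$ denoting the outward normal at $\xi\in\mathbb S^1$, and the $O(\delta^2)$ remainder uniform in $\xi$ thanks to the $C^{2,\alpha}$ control.

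Finally, I would invoke the Dirichlet-to-Neumann identification $\partial_\nu H|_{\mathbb S^1}=(-\Delta)^{1/2}h$, which in the unit disk case can be verified in one line using Fourier series: writing $h(e^{i\theta})=\sum_n a_ne^{in\theta}$ gives $H(re^{i\theta})=\sum_n a_n r^{|n|}e^{in\theta}$, so $\partial_r H|_{r=1}=\sum_n|n|a_ne^{in\theta}=(-\Delta)^{1/2}h$, consistent with the principal-value kernel written in the introduction. Chaining the three steps gives
$$\int_{\mathbb S^1}(h(f(z))-h(\xi))\,dz=2\pi\bigl(H((1-\delta)\xi)-H(\xi)\bigr)=-2\pi\delta(-\Delta)^{1/2}h(\xi)+O(\delta^2).$$

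The main obstacle is purely bookkeeping: tracking the sign (movement from $\xi$ to $(1-\delta)\xi$ is in the $-\nu$ direction) and certifying that the $O(\delta^2)$ remainder from the Taylor expansion is uniform in $\xi$, both of which follow cleanly from $h\in C^{2,\alpha}$. The essential point is that Möbius invariance converts the left-hand side, which looks like a nontrivial integral on the circle, into a mere second-order difference of values of the harmonic extension.
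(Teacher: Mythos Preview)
Your argument is correct. The paper does not prove this proposition here but defers to \cite[Proposition~7.2]{bmp}; judging from the kindred computations carried out elsewhere in the paper (for instance in the proof of Proposition~\ref{E1}), the approach there is to perform the same substitution $w=f(z)$, obtain the kernel $\dfrac{\delta(2-\delta)}{\delta^2+(1-\delta)|w-\xi|^2}$, and then expand it directly so as to isolate the term $\dfrac{2\delta}{|w-\xi|^2}$ and recognize the principal-value definition of $(-\Delta)^{1/2}h(\xi)$, estimating the remainder by hand. Your route is genuinely different in the last step: instead of unpacking the kernel you identify the whole integral as $2\pi\bigl(H((1-\delta)\xi)-H(\xi)\bigr)$ via the Poisson formula and then Taylor-expand $H$ radially, invoking the Dirichlet-to-Neumann identity $\partial_\nu H=(-\Delta)^{1/2}h$. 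This is cleaner and more conceptual---it makes transparent that the statement is nothing but a first-order expansion of the harmonic extension---at the modest cost of appealing to boundary Schauder regularity for $H$ (available here since $h\in C^{2,\alpha}$), whereas the direct kernel expansion works under the bare $C^2$ hypothesis but requires more explicit bookkeeping of the singular integral.
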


We can now give estimates on the correction term $W$ given in \eqref{Wbox}. 

\begin{lemma}\label{estW}The function $W$ satisfies 
\begin{equation}\label{w}
W(z)=O\(\delta|\eta|(1+\log|z+\xi|)+\frac{\delta^2}{\delta+|z+\xi|}\(1+\left|\log\frac{|z+\xi|}\delta\right|\)\),
\end{equation}
and
$$\|W\|_{L^p}=O\(\delta^{1+\frac1p}+\delta|\eta|\),\quad\left\|e^W\right\|_{L^p}=O(1),$$
for every $p\in[1,+\infty)$.
\end{lemma}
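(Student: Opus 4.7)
The plan is to start from the defining integral representation
$$W(z)=\frac1{\pi h(\xi)}\int_{\mathbb S^1}\log\frac1{|z-w|}(h(f(w))-h(\xi))\,dw$$
(using that $e^V=1/h(\xi)$ is constant), and to substitute the Taylor-like expansion provided by Proposition \ref{f}: $h(f(w))-h(\xi)=\delta h'(\xi)\Theta(w)+R(w)$ with $|R(w)|\le C\delta^2/(\delta+|w+\xi|)^2$. The crucial observation is that assumption \eqref{h1} gives $h'(1)=0$, so since $h\in C^2$ and $\xi=e^{i\eta}$, we have $h'(\xi)=h'(\xi)-h'(1)=O(|\eta|)$. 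This decomposes $W=W_{(1)}+W_{(2)}$ where $W_{(1)}$ collects the linear-in-$\delta$ contribution (carrying the extra smallness $|\eta|$) and $W_{(2)}$ the quadratic remainder.

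For $W_{(2)}$, the density $\delta^2/(\delta+|w+\xi|)^2$ is a bump of total mass $O(\delta)$ concentrated at scale $\delta$ around $-\xi$. Splitting $\mathbb S^1$ according to whether $|w+\xi|$ is much smaller than, comparable to, or much bigger than $|z+\xi|$, and on each piece bounding the logarithmic kernel by its value at a representative point, one obtains
$$|W_{(2)}(z)|\le C\frac{\delta^2}{\delta+|z+\xi|}\Bigl(1+\Bigl|\log\frac{|z+\xi|}\delta\Bigr|\Bigr).$$
For $W_{(1)}$, I would rewrite the denominator of $\Theta$ as $(1-\delta)|w+\xi|^2+\delta^2$, so that $|\Theta(w)|\le C/(\delta+|w+\xi|)$, and then exploit that $\Theta$ is odd under the reflection fixing $\pm\xi$ (its numerator $2\langle w,\xi^\perp\rangle$ is odd, denominator even). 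Pairing each $w$ with its reflection $w^*$ in the integral yields
$$W_{(1)}(z)=\frac{\delta h'(\xi)}{\pi h(\xi)}\int_{\text{half}}\log\frac{|z-w^*|}{|z-w|}\Theta(w)\,dw,$$
and the pointwise difference $|\log|z-w^*|-\log|z-w||$ is $O(\min(|z+\xi|,|w+\xi|)/\max(|z+\xi|,|w+\xi|))$, which after integration against the odd kernel only produces a single logarithmic factor, giving the bound $O(\delta|\eta|(1+\log|z+\xi|))$.

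The $L^p$ norms then follow by direct integration of the pointwise estimate. The logarithmic factor $1+|\log|z+\xi||$ lies in every $L^p(\mathbb S^1)$, so $\|W_{(1)}\|_{L^p}=O(\delta|\eta|)$. For $W_{(2)}$, the change of variables $\sigma=|z+\xi|$ reduces the norm to $\int_0^2(\delta^2/(\delta+\sigma))^p(1+|\log(\sigma/\delta)|)^p\,d\sigma=O(\delta^{p+1})$, so $\|W_{(2)}\|_{L^p}=O(\delta^{1+1/p})$. Finally, for $\|e^W\|_{L^p}$, the pointwise estimate yields $|W(z)|\le C\delta|\log\delta|+C\delta|\eta|(1+|\log|z+\xi||)$, so
$$e^{W(z)}\le e^{C\delta|\log\delta|}\bigl(1+|z+\xi|^{-C\delta|\eta|}\bigr),$$
and $|z+\xi|^{-C\delta|\eta|}$ has uniformly bounded $L^p(\mathbb S^1)$ norm as long as $C\delta|\eta|p<1$, which holds for $\delta$ small enough at fixed $p$. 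The main obstacle is the pointwise estimate for $W_{(1)}$: a naive convolution estimate produces an extra logarithmic factor of order $(\log\tfrac1\delta)^2$ near $-\xi$, which would degrade the $L^p$ norm to $O(\delta|\eta||\log\delta|)$. Exploiting the odd symmetry of $\Theta$ is what reduces this quadratic log to a linear one, and without this cancellation the scaling $O(\delta|\eta|)$ announced in the lemma would fail.
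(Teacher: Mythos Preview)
Your decomposition $W=W_{(1)}+W_{(2)}$ and the treatment of $W_{(1)}$ are reasonable, but there is a genuine gap in the estimate of $W_{(2)}$: the absolute value bound $|R(w)|\le C\delta^2/(\delta+|w+\xi|)^2$ together with the splitting you describe does \emph{not} yield the claimed pointwise bound. Take $\eta=0$ and a point $z$ with $|z+\xi|=r\in[\tfrac14,\tfrac34]$. On the region $|w+\xi|\le r/2$ one has $|z-w|\in[r/2,3r/2]$, so $|\log|z-w||$ is bounded below by a positive constant independent of $\delta$; the density has total mass $\sim\delta$ there. Hence
\[
\int_{\mathbb S^1}|\log|z-w||\,\frac{\delta^2}{(\delta+|w+\xi|)^2}\,dw\;\ge\;c\,\delta,
\]
whereas the bound you assert is $\frac{\delta^2}{\delta+r}\bigl(1+|\log\tfrac r\delta|\bigr)=O(\delta^2|\log\delta|)\ll\delta$. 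This also spoils the $L^p$ estimate: you would only get $\|W_{(2)}\|_{L^p}=O(\delta)$, not $O(\delta^{1+1/p})$.

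The missing idea is the cancellation coming from $\int_{\mathbb S^1}\log\frac1{|z-w|}\,dw=0$. One may subtract the value of the kernel at the concentration point $w=-\xi$ (equivalently, subtract $\log|(1-\delta)z+\xi|\sim\log(\delta+|z+\xi|)$) and write
\[
W_{(2)}(z)=\frac1{\pi h(\xi)}\int_{\mathbb S^1}\log\frac{|(1-\delta)z+\xi|}{|z-w|}\,R(w)\,dw\;-\;\frac{\log|(1-\delta)z+\xi|}{\pi h(\xi)}\int_{\mathbb S^1}R(w)\,dw.
\]
The first integral now has a kernel that is small where the density is concentrated, and this is what produces the factor $\frac{\delta^2}{\delta+|z+\xi|}$. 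The second integral involves the \emph{mean} of $R$, which by Proposition~\ref{intFracLap} (and the oddness of $\Theta$) equals $-2\pi\delta(-\Delta)^{1/2}h(\xi)+O(\delta^2)$. This is precisely why the paper's proof invokes not only $h'(\xi)=O(|\eta|)$ but also $(-\Delta)^{1/2}h(\xi)=O(|\eta|)$; your decomposition extracts only the former. Without controlling the mean of $R$, the sharp estimate fails.

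As a minor remark, your symmetry argument for $W_{(1)}$ is correct but roundabout: the integral $\int_{\mathbb S^1}\log\frac1{|z-w|}\Theta(w)\,dw$ is an explicit bounded $\arctan$ (this is used in the paper's next proposition), giving $W_{(1)}=O(\delta|\eta|)$ uniformly with no log at all.
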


\begin{proof}
Estimate \eqref{w} follows by \cite[Lemma 2.1]{bmp} noticing that, as a consequence of \eqref{h1},
$$|h'(\xi)|+\left|\(-\Delta\)^\frac12h(\xi)\right|=O(|\eta|).$$
The $L^p$ estimates are straightforward.
\end{proof}

\begin{proposition}The correction term $W$ satisfies 
\begin{eqnarray}
\label{wz}W(z)&=&\frac2{\pi h(\xi)}\delta\int_{\mathbb S^1}\log\frac{|w-\xi|}{|f(z)-w|}\frac{h(w)-h(\xi)-h'(\xi)\left\langle w,\xi^\perp\right\rangle}{|w-\xi|^2}dw\\
\nonumber&+&O\(\delta\log\frac1\delta(\delta+|\eta|)\(1+\log^2\frac1{\delta+|z+\xi|}\)\).
\end{eqnarray}
\end{proposition}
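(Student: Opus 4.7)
The plan is to start from \eqref{Wbox} with $e^V=1/h(\xi)$ and apply the conformal change of variables $w\mapsto y=f(w)$. Since $f^{-1}$ is Möbius, $|(f^{-1})'(y)|=J(y):=(2\delta-\delta^2)/(\delta^2+(1-\delta)|y-\xi|^2)$ and
$$|z-f^{-1}(y)|=J(f(z))^{\frac12}J(y)^{\frac12}|f(z)-y|.$$
Splitting $\log\frac{1}{|z-w|}$ into three pieces accordingly rewrites $W(z)=I_1(z)+I_2(z)+I_3$, where $I_1$ carries the kernel $\log\frac{1}{|f(z)-y|}$, the term $I_2$ carries the prefactor $-\frac12\log J(f(z))$ times $\frac{1}{\pi h(\xi)}\int_{\mathbb S^1}(h(y)-h(\xi))J(y)\,dy$, and $I_3$ carries $-\frac12\log J(y)$ inside its integrand.

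\smallskip

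The term $I_2(z)$ is entirely an error: reversing the substitution, its integral factor becomes $\int_{\mathbb S^1}(h(f(w))-h(\xi))\,dw=-2\pi\delta(-\Delta)^\frac12 h(\xi)+O(\delta^2)$ by Proposition \ref{intFracLap}, and \eqref{h1} together with $h\in C^{2,\alpha}$ yields $O(\delta(\delta+|\eta|))$. The prefactor $|\log J(f(z))|$ is controlled by $O(\log\frac1{\delta+|z+\xi|}+\log\frac1\delta)$ through the explicit identity $J(f(z))=(\delta^2+(1-\delta)|z+\xi|^2)/(\delta(2-\delta))$, so $I_2(z)$ fits within the stated error.

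\smallskip

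For $I_1$ and $I_3$ I substitute the approximations $J(y)\sim 2\delta/|y-\xi|^2$ and $\log J(y)\sim \log(2\delta)-2\log|y-\xi|$, based on
$$J(y)-\frac{2\delta}{|y-\xi|^2}=\frac{\delta^2(|y-\xi|^2-2\delta)}{|y-\xi|^2(\delta^2+(1-\delta)|y-\xi|^2)},$$
and I split $h(y)-h(\xi)=h'(\xi)\langle y,\xi^\perp\rangle+g(y)$ with $g(y)=O(|y-\xi|^2)$. Inside $I_3$ the piece proportional to $h'(\xi)\langle y,\xi^\perp\rangle/|y-\xi|^2$ integrates to zero against the even weight $\log|y-\xi|$ by odd symmetry, so $I_3$ reduces to the $z$-independent main contribution $\frac{2\delta}{\pi h(\xi)}\int_{\mathbb S^1}\log|y-\xi|\,g(y)/|y-\xi|^2\,dy$ up to errors (the $\log(2\delta)$ piece is absorbed via Proposition \ref{intFracLap} once more). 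The parallel substitution in $I_1$ produces $\frac{2\delta}{\pi h(\xi)}\int_{\mathbb S^1}\log\frac{1}{|f(z)-y|}\,g(y)/|y-\xi|^2\,dy$; summing with the $I_3$ contribution reconstructs the kernel $\log(|y-\xi|/|f(z)-y|)$ in the statement, while the residual $h'(\xi)$ contribution in $I_1$ is bounded via $h'(\xi)=O(|\eta|)$ from \eqref{h1}.

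\smallskip

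The main obstacle is the uniform-in-$z$ estimate of the remainder $\int_{\mathbb S^1}\log\frac{1}{|f(z)-y|}\,R(y)\,dy$ with $R(y):=(h(y)-h(\xi))(J(y)-2\delta/|y-\xi|^2)+2\delta h'(\xi)\langle y,\xi^\perp\rangle/|y-\xi|^2$. I split $\mathbb S^1$ into $\{|y-\xi|\lesssim\delta\}$, where Taylor cancellation from $h\in C^{2,\alpha}$ controls the integrand, and $\{|y-\xi|\gtrsim\delta\}$, where the decay $|J(y)-2\delta/|y-\xi|^2|\lesssim\delta^2/|y-\xi|^2$ is explicit. The logarithmic singularity of $\log\frac1{|f(z)-y|}$ near $y=f(z)$ is tracked via Proposition \ref{f} together with the bound $|f(z)-\xi|=O(\delta/(\delta+|z+\xi|))$, producing the $1+\log^2(1/(\delta+|z+\xi|))$ factor in the stated error.
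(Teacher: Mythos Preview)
Your approach and the paper's are essentially the same computation, organized differently. The paper also decomposes $W$ into a ``mass'' piece handled by Proposition~\ref{intFracLap} (their $W_1$, your $I_2$), a piece carrying the linear term $h'(\xi)\langle\,\cdot\,,\xi^\perp\rangle$ (their $W_2$), and a main piece on which the change of variables $v=f(w)$ is performed (their $W_3$); the M\"obius identity used in the paper,
\[
\frac{(1-\delta)z+\xi}{z-w}=\frac{(1-\delta)f(w)-\xi}{f(w)-f(z)},
\]
is algebraically equivalent to your factorization $|z-f^{-1}(y)|=J(f(z))^{1/2}J(y)^{1/2}|f(z)-y|$. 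Two points of departure are worth recording. First, the paper disposes of $W_2$ in the \emph{original} variable $w$ by recognizing an explicit primitive (an $\arctan$), which gives $W_2=O(\delta|\eta|)$ in one line; you instead fold this contribution into $R(y)$ and appeal to $h'(\xi)=O(|\eta|)$, which is legitimate but leaves a principal-value-type integral against $\log\frac1{|f(z)-y|}$ to be bounded by hand. Second, for the main remainder (their $W_{3,1}$, your $\int\log\frac1{|f(z)-y|}R(y)\,dy$) the paper does not split according to $|y-\xi|\lessgtr\delta$ but uses a \emph{trichotomy in the ratio} $|v-\xi|/|v-f(z)|$, distinguishing whether $v$ is much closer to $\xi$, much closer to $f(z)$, or at comparable distance from both; this is what produces the factor $1+\log^2\frac1{\delta+|z+\xi|}$ cleanly, and your dichotomy would need this extra refinement to close the estimate uniformly in $z$.
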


\begin{proof}
We split the function into three parts,
\begin{eqnarray*}
W(z)&=&\underbrace{-\frac1{\pi h(\xi)}\log|(1-\delta)z+\xi|\int_{\mathbb S^1}(h(f(w))-h(\xi))dw}_{=:W_1(z)}\\
&+&\underbrace{\frac1{\pi h(\xi)}\int_{\mathbb S^1}\log\frac{|(1-\delta)z+\xi|}{|z-w|}\(h'(\xi)\frac{(2-\delta)\delta\left\langle w,\xi^\perp\right\rangle}{\delta^2+(1-\delta)|w+\xi|^2}\)dw}_{=:W_2(z)}\\
&+&\underbrace{\frac1{\pi h(\xi)}\int_{\mathbb S^1}\log\frac{|(1-\delta)z+\xi|}{|z-w|}\(h(f(w))-h(\xi)-h'(\xi)\frac{(2-\delta)\delta\left\langle w,\xi^\perp\right\rangle}{\delta^2+(1-\delta)|w+\xi|^2}\)dw}_{=:W_3(z)}.
\end{eqnarray*}
By Proposition \ref{intFracLap} the first one can be estimated as
\begin{eqnarray*}
W_1(z)&=&-\frac1{\pi h(\xi)}\log\sqrt{\delta^2+(1-\delta)|z+\xi|^2}\(\delta\(-\Delta\)^\frac12h(\xi)+O\(\delta^2\)\)\\
&=&O\(\(1+\log\frac1{\delta+|z+\xi|}\)\delta(\delta+|\eta|)\).
\end{eqnarray*}
Furthermore, by Green's representation formula, 
\begin{eqnarray*}
W_2(z)&=&\frac{\delta(2-\delta)h'(\xi)}{\pi h(\xi)}\int_{\mathbb S^1}\log|z-w|\frac{\left\langle w,\xi^\perp\right\rangle}{\delta^2+(1-\delta)|w+\xi|^2}dw\\
&=&O(\delta|\eta|)\(-\frac{2\pi}{1-\delta}\arctan\frac{(1-\delta)\left\langle w,\xi^\perp\right\rangle}{1+(1-\delta)\langle z,\xi\rangle}\)\\
&=&O\(\delta|\eta|\),
\end{eqnarray*}
by noticing that
$$k_2=-\frac{2\pi}{1-\delta}\arctan\frac{(1-\delta)\left\langle z,\xi^\perp\right\rangle}{1+(1-\delta)\langle z,\xi\rangle}$$
solves
$$(-\Delta)^\frac12k_2=-\frac{2\pi\left\langle w,\xi^\perp\right\rangle}{\delta^2+(1-\delta)|w+\xi|^2}\quad\hbox{in}\;\mathbb S^1.$$
To estimate $W_3$ we write
$$\frac{(1-\delta)z+\xi}{z-w}=\frac{(1-\delta)f(w)-\xi}{f(w)-f(z)},$$
and thus
\begin{equation*}\begin{split}
&W_3(z)=\frac1{\pi h(\xi)}\int_{\mathbb S^1}\log\frac{|(1-\delta)v-\xi|}{|v-f(z)|}\(h(v)-h(\xi)-h'(\xi)\left\langle v,\xi^\perp\right\rangle\)\frac{\delta(2-\delta)}{\delta^2+(1-\delta)|v-\xi|^2}dv\\
&=\frac2{\pi h(\xi)}\delta\int_{\mathbb S^1}\log\frac{|v-\xi|}{|v-f(z)|}\frac{h(v)-h(\xi)-h'(\xi)\left\langle v,\xi^\perp\right\rangle}{|v-\xi|^2}dv\\
&\;+\underbrace{\frac1{\pi h(\xi)}\delta\int_{\mathbb S^1}\log\frac{|v-f(z)|}{|v-\xi|}\(h(v)-h(\xi)-h'(\xi)\left\langle v,\xi^\perp\right\rangle\)\(\frac2{|v-\xi|^2}-\frac{2-\delta}{\delta^2+(1-\delta)|v-\xi|^2}\)dv}_{:=W_{3,1}(z)}\\
&\;+\underbrace{\frac1{\pi h(\xi)}\int_{\mathbb S^1}\log\frac{|(1-\delta)v-\xi|}{|v-\xi|}\(h(v)-h(\xi)-h'(\xi)\left\langle v,\xi^\perp\right\rangle\)\frac{\delta(2-\delta)}{\delta^2+(1-\delta)|v-\xi|^2}dv}_{=:W_{3,2(z)}}.
\end{split}\end{equation*}
To analyze $W_{3,2}$ we use the pointwise estimates
\begin{eqnarray*}
\log\frac{|(1-\delta)v-\xi|}{|v-\xi|}&=&\frac12\(\log(1-\delta)+\log\(1+\frac{\delta^2}{(1-\delta)|v-\xi|^2}\)\)\\
&=&\left\{\begin{array}{ll}O\(1+\log\frac\delta{|v-\xi|}\)&|v-\xi|\le\delta,\\O\(\delta+\frac{\delta^2}{|v-\xi|^2}\)&|v-\xi|>\delta,\end{array}\right.
\end{eqnarray*}
and
\begin{eqnarray*}
\(h(v)-h(\xi)-h'(\xi)\left\langle v,\xi^\perp\right\rangle\)\frac{\delta(2-\delta)}{\delta^2+(1-\delta)|v-\xi|^2}&=&O\(\delta\frac{|v-\xi|^2}{\delta^2+(1-\delta)|v-\xi|^2}\)\\
&=&\left\{\begin{array}{ll}O\(\frac{|v-\xi|^2}\delta\)&|v-\xi|\le\delta,\\O(\delta)&|v-\xi|>\delta.\end{array}\right.
\end{eqnarray*}
From them we obtain
\begin{eqnarray*}
W_{3,2}&=&O\(\int_{|v-\xi|\le\delta}\(1+\log\frac\delta{|v-\xi|}\)\frac{|v-\xi|^2}\delta dv+\int_{|v-\xi|>\delta}\(\delta+\frac{\delta^2}{|v-\xi|^2}\)\delta dv\)\\
&=&O\(\delta^2\int_0^1\(1+\log\frac1t\)t^2dt+\delta^2\int_1^{O\(\frac1\delta\)}\(\delta+\frac1{t^2}\)dt\)\\
&=&O\(\delta^2\).
\end{eqnarray*}
To estimate $W_{3,1}$ we observe first that
\begin{eqnarray}
\label{hv}&&\(h(v)-h(\xi)-h'(\xi)\left\langle v,\xi^\perp\right\rangle\)\(\frac2{|v-\xi|^2}-\frac{2-\delta}{\delta^2+(1-\delta)|v-\xi|^2}\)\\
\nonumber&&\qquad =\;\frac{(h(v)-h(\xi)-h'(\xi)\left\langle v,\xi^\perp\right\rangle}{|v-\xi|^2}\frac{2\delta^2-\delta|v-\xi|^2}{\delta^2+(1-\delta)|v-\xi|^2}\\
\nonumber&&\qquad =\;O\(\frac\delta{\delta+|v-\xi|}\),
\end{eqnarray}
and we will divide the integral in three regions, depending on whether $v$ is closer to $\xi$ than to $f(z)$, further, or at a comparable distance. Notice that if 
$$\frac{|v-\xi|}{|v-f(z)|}\le\frac12,$$
then 
$$|v-\xi|\le2|f(z)-\xi|\quad\hbox{and }\quad\left|\log\frac{|v-\xi|}{|v-f(z)|}\right|=\log\frac{|v-f(z)|}{|v-\xi|}=O\(1+\log\frac{|f(z)-\xi|}{|v-\xi|}\).$$
On the contrary, if
$$\frac12<\frac{|v-\xi|}{|v-f(z)|}\le2,$$
then
$$|f(z)-\xi|\le3|v-f(z)|\quad\hbox{and }\quad\left|\log\frac{|v-\xi|}{|v-f(z)|}\right|=O\(\left|\frac{|v-\xi|}{|v-f(z)|}-1\right|\)=O\(\frac{|f(z)-\xi|}{|v-f(z)|}\).$$
Finally, if
$$\frac{|v-\xi|}{|v-f(z)|}>2,$$
then
$$|v-f(z)|\le|f(z)-\xi|\quad\hbox{and }\left|\log\frac{|v-\xi|}{|v-f(z)|}\right|=\log\frac{|v-\xi|}{|v-f(z)|}=O\(1+\log\frac{|f(z)-\xi|}{|v-f(z)|}\),$$
and, putting all this information together, we conclude that
\begin{eqnarray*}
W_{3,1}(z)&=&O\(\delta\int_{\frac{|v-\xi|}{|v-f(z)|}\le\frac12}\left|\log\frac{|v-\xi|}{|v-f(z)|}\right|\frac\delta{\delta+|v-\xi|}dv\right.\\
&+&\left.\delta\int_{\frac12<\frac{|v-\xi|}{|v-f(z)|}\le2}\left|\log\frac{|v-\xi|}{|v-f(z)|}\right|\frac\delta{\delta+|v-\xi|}dv+\right.\\
&+&\left.\delta\int_{\frac{|v-\xi|}{|v-f(z)|}>2}\left|\log\frac{|v-\xi|}{|v-f(z)|}\right|\frac\delta{\delta+|v-\xi|}dv\)\\
&=&O\(\delta^2\int_{|v-\xi|\le|f(z)-\xi|}\(1+\log\frac{|f(z)-\xi|}{|v-\xi|}\)\frac{dv}{\delta+|v-\xi|}\right.\\\\
&+&\left.\delta^2\int_{|v-f(z)|\ge\frac{|f(z)-\xi|}3}\frac{|f(z)-\xi|}{|v-f(z)|}\frac{2 dv}{|v-f(z)|}\right.\\
&+&\left.\delta^2\int_{|v-f(z)|\le|f(z)-\xi|}\(1+\log\frac{|f(z)-\xi|}{|v-f(z)|}\)\frac{dv}{\delta+|v-f(z)|}\)\\
&=&O\(\delta^2\int_{|u|\le|f(z)-\xi|}\(1+\log\frac{|f(z)-\xi|}{|u|}\)\frac{du}{\delta+u}+\delta^2|f(z)-\xi|\int_{\frac{|f(z)-\xi|}3}^2\frac{dt}{t^2}\)\\
&=&O\(\delta^2|f(z)-\xi|\int_0^1\(1+\log\frac1t\)\frac{dt}{\delta+|f(z)-\xi|t}+\delta^2\)\\
&=&O\(\delta|f(z)-\xi|\int_0^\frac\delta{|f(z)-\xi|}\(1+\log\frac1t\)dt+\delta^2\int_\frac\delta{|f(z)-\xi|}^1\(1+\log\frac1t\)\frac{dt}t+\delta^2\)\\
&=&O\(\delta^2\(1+\log^2\frac{|f(z)-\xi|}\delta\)\).
\end{eqnarray*}
\end{proof}

\begin{proposition}\label{e}
For any $1<p<2$,
$$\|\mathcal E\|_{L^p}=O\(\delta^\frac54+\delta|\eta|+\delta^\frac14\e+|\eta|\e+|\tau|\).$$
\end{proposition}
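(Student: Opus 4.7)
The plan is to split $\mathcal E$ into three natural pieces using $h_\varepsilon = h + \varepsilon k$ and $e^V = 1/h(\xi)$: write
$$\mathcal E = \underbrace{\frac{h(f(z))(e^{W+\tau}-1)}{h(\xi)}}_{\mathcal E_1} + \underbrace{\frac{\varepsilon k(f(z)) e^{W+\tau}}{h(\xi)}}_{\mathcal E_2} + \underbrace{\frac{1}{2\pi h(\xi)}\int_{\mathbb S^1}(h(f(w))-h(\xi))dw}_{\mathcal E_3},$$
and bound each piece in $L^p$, matching its contribution to one of the terms in the claimed rate.

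For $\mathcal E_3$, Proposition \ref{intFracLap} reduces the problem to estimating $(-\Delta)^{1/2}h(\xi)$. Since $h\in C^{2,\alpha}$ gives $(-\Delta)^{1/2}h\in C^{1,\alpha}(\mathbb S^1)$, and assumption \eqref{h1} forces $(-\Delta)^{1/2}h(1)=0$, a first-order expansion yields $(-\Delta)^{1/2}h(\xi)=O(|\eta|)$, whence $\mathcal E_3=O(\delta|\eta|+\delta^2)$ as a constant in any $L^p$.

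For $\mathcal E_1$, I would Taylor-expand $e^{W+\tau}-1=(W+\tau)+O((W+\tau)^2)$ and factor $h(f(z))/h(\xi)=1+O(|f(z)-\xi|)$, so that $\mathcal E_1=W+\tau+R$ where $R$ collects cross and quadratic remainders. Lemma \ref{estW} directly supplies $\|W\|_{L^p}=O(\delta^{1+1/p}+\delta|\eta|)\le O(\delta^{5/4}+\delta|\eta|)$ for $1<p<2$; the cross terms $|f-\xi|W$ and $|f-\xi|\tau$ are handled by Hölder's inequality combined with the $L^p$ bound on $f-\xi$ from Proposition \ref{f}, and the quadratic term is controlled via $\|W^2\|_{L^p}=\|W\|_{L^{2p}}^2$, both producing rates below $\delta^{5/4}$.

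For $\mathcal E_2$, the key inputs are $k(1)=0$ and $k\in C^1$, which yield the pointwise bound $|k(f(z))|\le C(|f(z)-\xi|+|\eta|)$. Splitting by Hölder's inequality with exponents $4$ and $4p/(4-p)$ (both finite because $1<p<2$) and combining $\|f(z)-\xi\|_{L^4}=O(\delta^{1/4})$ from Proposition \ref{f} with the uniform $L^q$ bound on $e^W$ from Lemma \ref{estW} produces exactly the contribution $\varepsilon\delta^{1/4}+\varepsilon|\eta|$. The main obstacle lies precisely in this Hölder balancing: since $e^W$ is not uniformly bounded in $L^\infty$, one must trade integrability between $k(f(z))$ and $e^W$ while still extracting the sharp $\delta^{1/4}$ rate from $\|f(z)-\xi\|_{L^4}$, and the restriction $p<2$ is what keeps both Hölder exponents finite.
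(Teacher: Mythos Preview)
Your decomposition into $\mathcal E_1,\mathcal E_2,\mathcal E_3$ and the lemmas you invoke (Proposition~\ref{f}, Proposition~\ref{intFracLap}, Lemma~\ref{estW}) are exactly those used in the paper, and your treatment of $\mathcal E_2$ and $\mathcal E_3$ is essentially identical to the paper's; the paper uses H\"older exponents $(2p,2p)$ for $\mathcal E_2$ rather than your $(4,4p/(4-p))$, giving the slightly sharper $\delta^{1/(2p)}$ instead of $\delta^{1/4}$, but both imply the stated bound.

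The one point that needs care is your handling of $\mathcal E_1$. You write the Taylor remainder as $O((W+\tau)^2)$, but Lemma~\ref{estW} shows $W$ has a logarithmic singularity at $z=-\xi$ and is not in $L^\infty$, so the second-order remainder is only $O\bigl((W+\tau)^2 e^{|W+\tau|}\bigr)$ pointwise. This is not fatal: you can absorb the extra exponential via H\"older exactly as you do for $\mathcal E_2$, using $\|e^{|W|}\|_{L^q}=O(1)$ from Lemma~\ref{estW}, and the argument closes. The paper sidesteps this entirely by using the global inequality $|e^t-1|\le(1+e^t)|t|$ and a single H\"older step,
\[
\|\mathcal E_1\|_{L^p}=O\bigl(\|1+e^{W+\tau}\|_{L^{2p}}\|W+\tau\|_{L^{2p}}\bigr)=O\bigl(\delta^{1+\frac1{2p}}+\delta|\eta|+|\tau|\bigr),
\]
which is cleaner than splitting into linear, cross, and quadratic pieces.
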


\begin{proof}
Recalling the definition of $h_\e$, we split the error term in three parts,
\begin{equation}\label{splitE}
\mathcal E=\underbrace{\frac1{h(\xi)}h(f(z))\(e^{W+\tau}-1\)}_{=:\mathcal E_1}+\underbrace{\frac1{h(\xi)}\e k(f(z))e^{W+\tau}}_{=:\mathcal E_2}+\underbrace{\frac1{2\pi h(\xi)}\int_{\mathbb S^1}(h(f(w))-h(\xi))dw}_{=:\mathcal E_3}.
\end{equation}
For the first one, by the inequality $\left|e^t-1\right|\le\(1+e^t\)|t|$ we obtain
\begin{eqnarray}
\nonumber\|\mathcal E_1\|_{L^p}&=&O\(\left\|e^{W+\tau}-1\right\|_{L^p}\)=O\(\left\|1+e^{W+\tau}\right\|_{L^{2p}}\left\|W+\tau\right\|_{L^{2p}}\)\\
\label{E1p}&=&O\(\delta^{1+\frac1{2p}}+\delta|\eta|+|\tau|\).
\end{eqnarray}
Since $k(1)=0$, one has $k(f(z))=O(|f(z)-\xi|+|\eta|)$, and hence, using Proposition \ref{f} and Lemma \ref{estW},
\begin{equation}\label{E2p}
\left\|\mathcal E_2\right\|_{L^p}=O\(\e\||f(z)-\xi|+|\eta|\|_{L^{2p}}\left\|e^{W+\tau}\right\|_{L^{2p}}\)=O\(\e\(\delta^\frac1{2p}+|\eta|\)\).
\end{equation}
Finally, applying Proposition \ref{intFracLap} we conclude 
$$\mathcal E_3=-\frac1{h(\xi)}\(\delta\(-\Delta\)^\frac12h(\xi)+O\(\delta^2\)\)=O\(\delta^2+\delta|\eta|\),$$
and the result follows.
\end{proof}

\medskip

\section{The projected problem}

The results contained in this section follow from \cite[Section 3 and Section 4]{bmp} by simplifying to the case $K\equiv0$. We enunciate here the statements traslated to this context.

Define
$$\mathfrak C:=\left\{\xi\in\partial\D:\;h(\xi)\ne0\right\}.$$
Notice that, since we are assuming $h(1)>0$, we will have $\xi\in\mathfrak C$ for any $\xi$ close enough to 1 (that is, for $\delta$ small enough). We consider the Hilbert space
$$\mathbf H:=\left\{\phi\in H^1(\D):\;\int\limits_{\mathbb S^1}\phi=0\right\},$$
equipped with the scalar product and the corresponding norm
$$\langle u,v\rangle:=\int\limits_\D\nabla u\nabla v\quad\hbox{and}\quad\|u\|:=\|\nabla u\|_{L^2(\D)}=\(\int\limits_\D|\nabla u|^2\)^\frac12.$$
We start stating a linear invertibility result. Consider the functions
\begin{equation}\label{generators}
\mathcal Z_1(z):=\frac{\langle z,\xi\rangle}{4h(\xi)^2},\quad\quad\quad\mathcal Z_2(z):=\frac{\left\langle z,\xi^\perp\right\rangle}{4h(\xi)^2},
\end{equation}
that satisfy
\begin{equation}\label{eqlin}
(-\Delta)^\frac12\mathcal Z_i=h(\xi)e^V\mathcal Z_i\quad\hbox{in}\;\mathbb S^1,\quad\quad\quad i=1,2.
\end{equation}
Thus, we can state the following linear invertibility result.
\begin{theorem}[see Theorem 3.3 in \cite{bmp}]
Fix $p>1$ and $\mathfrak C'\Subset\mathfrak C$. For any $\xi\in\mathfrak C'$ and $\zeta\in L^p\(\mathbb S^1\)$ such that
\begin{equation*}
\int\limits_{\mathbb S^1}\zeta=\int\limits_{\mathbb S^1}\zeta\mathcal Z_i=0,\quad i=1,2,
\end{equation*}
there exists a unique solution $\phi\in H^1(\D)$ to the problem
$$\begin{cases}(-\Delta)^\frac12\phi=h(\xi)e^{V_\xi}\phi+\zeta&\hbox{in}\;\mathbb S^1\\
\int\limits_{\mathbb S^1}\phi=\int\limits_{\mathbb S^1}\phi\mathcal Z_i=0&i=1,2.\\
\end{cases}$$
Furthermore
\begin{equation*}
\|\phi\|\le C_p\|\zeta\|_{L^p},
\end{equation*}
where the constant $C_p$ only depends on $p$ and the compact set $\mathfrak C'$.
\end{theorem}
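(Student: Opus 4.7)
The plan is to combine the Fredholm alternative with the non-degeneracy classification of Ou \cite{ou} and Zhang \cite{zhang}, and to obtain the uniform $L^p$--$H^1$ estimate via a blow-up/contradiction argument. The ingredient that makes the analysis transparent is the observation that, since $V_\xi\equiv-\log h(\xi)$, one has $h(\xi)e^{V_\xi}\equiv 1$, so the linearised operator reduces to a Steklov-type problem: identifying $\phi\in\mathbf H_\xi$ with its harmonic extension, the equation becomes $(-\Delta)^\frac12\phi-\phi=d$ on $\mathbb S^1$, and the kernel of $(-\Delta)^\frac12-\mathrm{Id}$ on zero-mean boundary traces is the eigenspace of the first non-trivial Steklov eigenvalue, which is precisely the two-dimensional span of $\mathcal Z_1$ and $\mathcal Z_2$ (this is the content of the Ou--Zhang classification applied to \eqref{eqlin}).

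Existence and uniqueness of $\phi$ in the codimension-two subspace
$$\mathbf H_\xi^\perp:=\left\{\phi\in\mathbf H_\xi:\int_{\mathbb S^1}\phi\mathcal Z_i=0,\ i=1,2\right\}$$
then follow from the Fredholm alternative applied to the bilinear form
$$a_\xi(\phi,\psi):=\int_\D\nabla\phi\cdot\nabla\psi-\int_{\mathbb S^1}h(\xi)e^{V_\xi}\phi\psi,$$
which differs from the Dirichlet form by a compact term, the compactness being provided by the trace $H^1(\D)\hookrightarrow L^2(\mathbb S^1)$. The three compatibility conditions on $d$ (zero mean and orthogonality to $\mathcal Z_1,\mathcal Z_2$) match precisely the three-dimensional kernel of the adjoint, spanned by the constants together with $\mathcal Z_1,\mathcal Z_2$, so solvability follows.

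The main obstacle is the uniform a priori bound $\|\phi\|\le C_p\|d\|_{L^p}$ with a constant that does not deteriorate as $\xi$ varies in $\mathfrak C'$. I would argue by contradiction, assuming sequences $\xi_n\in\mathfrak C'$, $d_n\in L^p(\mathbb S^1)$ with $\|d_n\|_{L^p}\to 0$, and solutions $\phi_n\in\mathbf H_{\xi_n}^\perp$ with $\|\phi_n\|=1$. Up to a subsequence, $\xi_n\to\xi_\infty\in\overline{\mathfrak C'}\subset\mathfrak C$ by compactness, and $\phi_n\rightharpoonup\phi_\infty$ weakly in $H^1(\D)$ and strongly in $L^q(\mathbb S^1)$ for every $q<\infty$, by the compact embedding $H^1(\D)\hookrightarrow H^{1/2}(\mathbb S^1)\hookrightarrow L^q(\mathbb S^1)$ in dimension one. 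Passing to the limit in the equation and in the orthogonality/zero-average conditions yields $\mathcal L_0\phi_\infty=0$ in $\mathbf H_{\xi_\infty}^\perp$, so $\phi_\infty\equiv 0$ by non-degeneracy. The contradiction with $\|\phi_n\|=1$ is then obtained by testing the equation against $\phi_n$ itself:
$$\|\phi_n\|^2=\int_{\mathbb S^1}h(\xi_n)e^{V_{\xi_n}}\phi_n^2+\int_{\mathbb S^1}d_n\phi_n,$$
where the first term goes to $0$ by strong $L^2(\mathbb S^1)$-convergence of $\phi_n$ to $0$, and the second by H\"older's inequality combined with the uniform boundedness of $\phi_n$ in $L^{p'}(\mathbb S^1)$; the assumption $p>1$ is essential here to ensure $p'<\infty$, and the choice $\mathfrak C'\Subset\mathfrak C$ is essential to keep $h(\xi_n)$ bounded away from zero.
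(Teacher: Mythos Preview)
Your proposal is correct and follows the standard approach one expects in \cite{bmp}: Fredholm alternative for existence/uniqueness, together with a compactness--contradiction argument for the uniform a priori bound. The paper itself does not give a proof here; it simply refers to \cite[Theorem~3.3]{bmp} and remarks that the present statement is the specialisation $K\equiv0$.

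One observation that streamlines your argument considerably: once you note $h(\xi)e^{V_\xi}\equiv1$, the operator $(-\Delta)^{1/2}-\mathrm{Id}$ is \emph{independent of $\xi$}, and the two-dimensional kernel $\mathrm{span}\{\mathcal Z_1,\mathcal Z_2\}$ is the same subspace (first Fourier modes) for every $\xi$, since $\{\xi,\xi^\perp\}$ is just a rotated orthonormal frame. Consequently the orthogonal complement $\mathbf H_\xi^\perp$ and the solution map $d\mapsto\phi$ do not depend on $\xi$ at all, and the estimate $\|\phi\|\le C_p\|d\|_{L^p}$ is automatically uniform in $\xi$ once established for a single value. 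Your blow-up/contradiction step, while correct, is therefore more than is needed in this simplified setting; it is the argument one genuinely needs in \cite{bmp}, where the interior curvature $K$ makes the linearised operator $\xi$-dependent. The role of $\mathfrak C'\Subset\mathfrak C$ here reduces to keeping $h(\xi)$ bounded away from zero so that $V_\xi$ is well defined.
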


In order to find a solution of \eqref{eqphi}, we will solve first the associated projected problem
\begin{equation}\label{projProb}
\mathcal L_0\phi=\mathcal E+\mathcal L\phi+\mathcal N(\phi)+c_0+h(\xi)e^\frac V2(c_1\mathcal Z_1+c_2\mathcal Z_2)\quad\hbox{in}\;\mathbb S^1,
\end{equation}
with $\mathcal L_0$, $\mathcal E$, $\mathcal L$, $\mathcal N$, defined in \eqref{ebdr}, $\mathcal Z_1$, $\mathcal Z_2$ given by \eqref{generators} and $c_0$, $c_1$, $c_2\in\R$. 

\begin{lemma}\label{l}Let $\phi\in\mathbf H$. Then, for any $1<p<\frac43$,
$$\left\|\mathcal L\phi\right\|_{L^p}=O\(\(\delta^{\frac54}+\delta|\eta|+\delta^\frac14\e+|\eta|\e+|\tau|\)\|\phi\|\).$$
\end{lemma}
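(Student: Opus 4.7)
My plan is to mirror the proof of Proposition \ref{e}, carrying one extra factor $\phi(z)$ through every estimate and absorbing it via the continuous embedding $\|\phi\|_{L^q(\mathbb S^1)}\le C_q\|\phi\|$, valid for every $q<\infty$ and every $\phi\in\mathbf H_\xi$ by the trace theorem plus Sobolev. Since $e^V=1/h(\xi)$ is bounded uniformly for $\xi$ close to $1$, the pointwise splitting
$$h_\e(f(z))e^{W+\tau}-h(\xi)=h(f(z))\(e^{W+\tau}-1\)+\e k(f(z))e^{W+\tau}+\(h(f(z))-h(\xi)\),$$
which mirrors the splitting \eqref{splitE} of $\mathcal E$, reduces the problem to bounding the $L^p$ norm of each of the three summands times $\phi$. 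The assumption $p<\frac43$ ensures that I can always pick finite H\"older triples $\frac1p=\frac1{q_1}+\frac1{q_2}+\frac1{q_3}$ in the estimates below.

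For the first summand I would use $|e^t-1|\le(1+e^{|t|})|t|$, H\"older, Lemma \ref{estW} and $\|h(f(z))\|_{L^\infty}=O(1)$: this reproduces \eqref{E1p} with one extra $\|\phi\|_{L^q}$, yielding $O\((\delta^{5/4}+\delta|\eta|+|\tau|)\|\phi\|\)$. For the second summand, $k(1)=0$ forces $|k(f(z))|\le C(|f(z)-\xi|+|\eta|)$, and Proposition \ref{f} together with Lemma \ref{estW} reproduces \eqref{E2p} with one extra $\|\phi\|_{L^q}$, giving $O\(\e(\delta^{1/4}+|\eta|)\|\phi\|\)$.

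The delicate step is the third summand $\(h(f(z))-h(\xi)\)\phi/h(\xi)$, since the pointwise quantity $h(f(z))-h(\xi)$ has no direct counterpart in the $\mathcal E$--splitting (where only its average appeared, as $\mathcal E_3$). I would exploit the natural identity
$$\frac{h(f(z))-h(\xi)}{h(\xi)}=(-\Delta)^\frac12W(z)+\mathcal E_3,$$
coming from the definition \eqref{Wbox} of $W$: the constant part contributes $|\mathcal E_3|\|\phi\|_{L^p}=O\((\delta|\eta|+\delta^2)\|\phi\|\)$ by Proposition \ref{intFracLap}, while for the zero-average part $(-\Delta)^{1/2}W$ I would mirror the splitting $W=W_1+W_2+W_3$ from the preceding proof: $W_1$ and $W_3$ contribute only higher-order terms, and the tangential block $W_2$ is produced by the \emph{bounded} primitive $k_2$ exhibited in the previous proof, which is the reason the estimate keeps the sharp weight $\delta|\eta|$ rather than degenerating into $\delta^{1/2}|\eta|$. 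This last point is the main obstacle: the $\delta^{-1/2}$ blow-up of $\|\Theta\|_{L^2}$ that a brute-force H\"older split of $(h(f(z))-h(\xi))\phi$ would incur is precisely what the structural information on $W$ developed in Section 2 is designed to sidestep.
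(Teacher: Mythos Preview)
Your handling of the first two summands coincides with the paper's argument: there, the entire proof consists in identifying the coefficient of $\mathcal L$ with $\mathcal E_1+\mathcal E_2$ and applying H\"older with exponents $\frac{3p}2,\,3p$, quoting \eqref{E1p} and \eqref{E2p}. You are correct that this identification is not exact --- the coefficient of $\mathcal L$ is $\big(h_\e(f)e^{W+\tau}-h(\xi)\big)e^V$, while $\mathcal E_1+\mathcal E_2=\big(h_\e(f)e^{W+\tau}-h(f)\big)e^V$, leaving a third piece $(h(f)-h(\xi))e^V\phi$ --- and the paper's short proof does not treat it separately.

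Your proposed treatment of this third piece, however, has a genuine gap. The identity $(h(f)-h(\xi))e^V=(-\Delta)^{1/2}W+\mathcal E_3$ is a tautology: $(-\Delta)^{1/2}W$ is \emph{by construction} $(h(f)-h(\xi))e^V$ minus its mean, so passing to it gains nothing. The decomposition $W=W_1+W_2+W_3$ from the proof of \eqref{wz} controls $W$ pointwise, not $(-\Delta)^{1/2}W$. In particular, the bounded primitive $k_2$ gives $\|W_2\|_{L^\infty}=O(\delta|\eta|)$, but
\[
(-\Delta)^{1/2}k_2(w)=-\frac{2\pi\langle w,\xi^\perp\rangle}{\delta^2+(1-\delta)|w+\xi|^2}
\]
has $L^q$ norm of order $\delta^{1/q-1}$, so $(-\Delta)^{1/2}W_2$ still carries the weight $|\eta|\delta^{1/q}$ rather than $\delta|\eta|$: the very $\delta^{-1/2}$ loss you set out to avoid reappears one half-derivative up. A direct H\"older bound on $(h(f)-h(\xi))\phi$ therefore gives at best $O\big(\delta^{1/q}\|\phi\|\big)$ with $q$ slightly larger than $p$, which for $1<p<\frac43$ is no better than about $O\big(\delta^{3/4}\|\phi\|\big)$ and does not recover the $\delta^{5/4}$ in the stated bound. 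The structural information you invoke lives at the level of $W$, one half-derivative below where you actually need it.
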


\begin{proof}
By \eqref{ebdr} and \eqref{splitE} we can write $\mathcal L=\mathcal E_1+\mathcal E_2$ and then, 
\begin{eqnarray*}
\left\|\mathcal L\phi\right\|_{L^p}&=&O\(\left\|\mathcal E_1\phi\right\|_{L^p}+\left\|\mathcal E_2\phi\right\|_{L^p}\)\\
&=&O\(\left\|\mathcal E_1\right\|_{L^{\frac32p}}\|\phi\|_{L^{3p}}+\left\|\mathcal E_2\right\|_{L^{\frac32p}}\|\phi\|_{L^{3p}}\)\\
&=&O\(\(\delta^{\frac54}+\delta|\eta|+\delta^\frac14\e+|\eta|\e+|\tau|\)\|\phi\|\),
\end{eqnarray*}
where in the last step we have used \eqref{E1p} and \eqref{E2p}.
\end{proof}

\begin{lemma}\label{n}Let $\phi,\phi'\in\mathbf H$. For any $p>1$,
\begin{equation*}
\left\|\mathcal N(\phi)-\mathcal N(\phi')\right\|_{L^p}=O\(\|\phi-\phi'\|(\|\phi\|+\|\phi'\|)e^{O\(\|\phi\|^2+\|\phi'\|^2\)}\).
\end{equation*}
In particular
$$\|\mathcal N(\phi)\|_{L^p}=O\(\|\phi\|^2e^{O\(\|\phi\|^2\)}\).$$
\end{lemma}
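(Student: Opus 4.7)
The plan is to derive a pointwise bound on $\mathcal N(\phi) - \mathcal N(\phi')$ and then apply H\"older's inequality together with a Moser--Trudinger exponential integrability estimate. First I would write $\mathcal N(\phi) - \mathcal N(\phi') = h_\e(f(z))\, e^{V+W+\tau}\bigl[(e^\phi - \phi) - (e^{\phi'} - \phi')\bigr]$ and use the representation $(e^\phi - \phi) - (e^{\phi'} - \phi') = (\phi - \phi') \int_0^1 (e^{\phi' + t(\phi - \phi')} - 1)\, dt$. Combined with the elementary inequality $|e^s - 1| \le |s|\, e^{|s|}$ and $|\phi' + t(\phi - \phi')| \le |\phi| + |\phi'|$, this yields the pointwise bound $|\mathcal N(\phi) - \mathcal N(\phi')| \le C\, h_\e(f(z))\, e^{V+W+\tau}\, (|\phi| + |\phi'|)\, e^{|\phi| + |\phi'|}\, |\phi - \phi'|$ on $\mathbb S^1$.

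Next I would apply H\"older's inequality splitting the right-hand side into four factors, with exponents $r, s_1, s_2, q$ satisfying $1/p = 1/r + 1/s_1 + 1/s_2 + 1/q$. The prefactor $h_\e(f(z))\, e^{V+W+\tau}$ is bounded in $L^r(\mathbb S^1)$ for every $r < \infty$: $h_\e$ is uniformly bounded, $e^V = 1/h(\xi)$ is a constant, $\tau$ is small, and $\|e^W\|_{L^r} = O(1)$ by Lemma \ref{estW}. The linear factors $|\phi - \phi'|$ and $|\phi| + |\phi'|$ are controlled in any $L^{s_j}(\mathbb S^1)$ by the trace embedding together with $H^{1/2}(\mathbb S^1) \hookrightarrow L^s(\mathbb S^1)$, which holds for every $s < \infty$, giving $\|\psi\|_{L^s(\mathbb S^1)} \le C_s \|\psi\|$ for every $\psi \in \mathbf H_\xi$.

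The essential ingredient is controlling $e^{|\phi| + |\phi'|}$ in $L^q$. By the Moser--Trudinger inequality for zero-average functions in $H^{1/2}(\mathbb S^1)$, there exist constants $\alpha, C > 0$ such that $\int_{\mathbb S^1} \exp(\alpha \psi^2/\|\psi\|^2)\, d\sigma \le C$ for every $\psi \in \mathbf H_\xi$. Combined with Young's inequality $m|\psi| \le \alpha \psi^2/\|\psi\|^2 + m^2\|\psi\|^2/(4\alpha)$, this upgrades to $\|e^{m|\psi|}\|_{L^q(\mathbb S^1)} \le C\, e^{O(\|\psi\|^2)}$ for every finite $m, q$. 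Applying this to $\psi = \phi$ and $\psi = \phi'$ and assembling the four H\"older pieces yields the desired estimate; the in-particular statement then follows by taking $\phi' = 0$ and noting $\mathcal N(0) = 0$. The main obstacle I anticipate is verifying that the Moser--Trudinger inequality applies cleanly to trace elements of $\mathbf H_\xi$; beyond that, the argument reduces to routine H\"older bookkeeping.
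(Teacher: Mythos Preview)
Your proof is correct and follows essentially the same strategy as the paper: a pointwise estimate on $e^\phi-\phi-(e^{\phi'}-\phi')$, then a four-factor H\"older inequality combined with Lemma~\ref{estW} for $e^W$ and the Moser--Trudinger inequality (the paper's \cite[Lemma 3.2]{bmp}) for the exponential term, with the in-particular case obtained by setting $\phi'=0$. The only cosmetic difference is that the paper writes the pointwise bound as $|s-t|(|s|+|t|)(1+e^{s+t})$ and uses equal H\"older exponents $4p$, whereas you obtain $|s-t|(|s|+|t|)e^{|s|+|t|}$ via the integral mean-value representation; your version is in fact the cleaner of the two.
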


\begin{proof}
Using the estimate
$$e^t-t-e^s+s=O\(|s-t|(|s|+|t|)\(1+e^{s+t}\)\),$$
Lemma \ref{estW} and the Moser-Trudinger type inequality in \cite[Lemma 3.2]{bmp} we get
\begin{eqnarray*}
&&\left\|\mathcal N(\phi)-\mathcal N(\phi')\right\|_{L^p}\\
&&\quad=\;\left\|h_\e(f(z))e^{V+W+\tau}\(e^\phi-\phi-e^{\phi'}+\phi'\)\right\|_{L^p}\\
&&\quad=\;O\(\left\|e^{W+\tau}\(|\phi-\phi'|(|\phi|+|\phi'|)\(1+e^{\phi+\phi'}\)\)\right\|_{L^p}\)\\
&&\quad=\;O\(\left\|e^{W+\tau}\right\|_{L^{4p}}\|\phi-\phi'\|_{L^{4p}}\(\|\phi\|_{L^{4p}}+\|\phi'\|_{L^{4p}}\)\left\|1+e^{\phi+\phi'}\right\|_{L^{4p}}\)\\
&&\quad=\;O\(\|\phi-\phi'\|_{L^{4p}}\(\|\phi\|_{L^{4p}}+\|\phi'\|_{L^{4p}}\)e^{O\(\|\phi+\phi'\|^2\)}\)\\
&&\quad=\;O\(\|\phi-\phi'\|(\|\phi\|+\|\phi'\|)e^{O\(\|\phi+\phi'\|^2\)}\).
\end{eqnarray*}
The second identity follows just replacing with $\phi'=0$.
\end{proof}

\begin{proposition}\label{contr}
Assume $\delta,|\eta|,|\tau|,\e\le\e_0\ll1$. Then, there exists a unique $(\phi,c_0,c_1,c_2)\in\mathbf H\times\R^3$ such that \eqref{projProb} has a solution, which additionally satisfies
\begin{equation}\label{phio}
\|\phi\|=O\(\delta^\frac54+\delta|\eta|+\delta^\frac14\e+|\eta|\e+|\tau|\).
\end{equation}
\end{proposition}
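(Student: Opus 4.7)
The plan is to recast \eqref{projProb} as a fixed-point equation for $\phi$ and apply the Banach contraction principle on a ball whose radius matches the size of the source term $\mathcal E$. Fixing once and for all $p \in (1, 4/3)$, the linear theory cited above provides a bounded right inverse $\mathcal L_0^{-1}: L^p_\perp \to \mathbf H_\xi$, where $L^p_\perp$ denotes the subspace of $L^p(\mathbb S^1)$ orthogonal to $\{1, \mathcal Z_1, \mathcal Z_2\}$, with operator norm bounded by a constant $C_p$ depending only on $p$ and the compact set $\mathfrak C'$ containing $\xi$. Letting $\Pi$ be the corresponding projector, I would rewrite \eqref{projProb} as
$$\phi = T(\phi) := \mathcal L_0^{-1} \Pi \left( \mathcal E + \mathcal L \phi + \mathcal N(\phi) \right),$$
with $c_0, c_1, c_2$ recovered afterwards as the components of $\mathcal E + \mathcal L \phi + \mathcal N(\phi)$ along $1, \mathcal Z_1, \mathcal Z_2$ in the three-dimensional complement.

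\textbf{Self-map and contraction.} Set $\rho := \delta^{5/4} + \delta|\eta| + \delta^{1/4}\e + |\eta|\e + |\tau|$, fix a large constant $M$, and work on the ball $B_R = \{\phi \in \mathbf H_\xi : \|\phi\| \le R\}$ with $R = M\rho$. Combining $\|\mathcal E\|_{L^p} = O(\rho)$ from Proposition \ref{e}, $\|\mathcal L\phi\|_{L^p} = O(\rho)\|\phi\|$ from Lemma \ref{l}, and $\|\mathcal N(\phi)\|_{L^p} = O(\|\phi\|^2)$ from Lemma \ref{n} (since $e^{O(R^2)} = 1 + o(1)$ on $B_R$), the bound $\|\mathcal L_0^{-1}\| \le C_p$ yields
$$\|T(\phi)\| \le C_p \left( O(\rho) + O(\rho) R + O(R^2) \right) \le R$$
on $B_R$, provided $M$ is chosen large enough and $\e_0$ small enough. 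The linearity of $\mathcal L$ together with the Lipschitz estimate in Lemma \ref{n} gives in the same way
$$\|T(\phi) - T(\phi')\| \le C_p \left( O(\rho) + O(R) \right) \|\phi - \phi'\| \le \tfrac{1}{2} \|\phi - \phi'\|,$$
so Banach's fixed-point theorem produces a unique $\phi \in B_R$ with $T(\phi) = \phi$. This yields the estimate \eqref{phio}, and the Lagrange multipliers $c_0, c_1, c_2$ are determined uniquely by the projection decomposition.

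\textbf{Main obstacle.} The delicate point is that the same quantity $\rho$ must simultaneously control the size of the source $\mathcal E$ (from Proposition \ref{e}) and the operator norm of the linear error $\mathcal L$ (from Lemma \ref{l}); otherwise the fixed-point argument would not close and a smaller radius $R$ would be needed, contradicting the bound on $\|T(\phi)\|$. This is precisely why Lemma \ref{l} requires $1 < p < 4/3$: the Hölder splitting in its proof forces $p$ strictly below $4/3$ in order that $\|\mathcal L\phi\|_{L^p}$ inherit the full smallness $\rho\|\phi\|$ rather than a weaker factor. Once this compatibility is secured the rest of the argument is a standard Ljapunov--Schmidt contraction.
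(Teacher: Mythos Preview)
Your proposal is correct and follows essentially the same approach as the paper: both rewrite \eqref{projProb} as a fixed-point equation via the linear invertibility theorem, then apply Banach's contraction principle on a ball of radius $R=M\rho$ using the estimates from Proposition~\ref{e}, Lemma~\ref{l}, and Lemma~\ref{n}. The paper simply refers to the analogous argument in \cite[Proposition~4.3]{bmp} and records the same bounds on $\|\mathcal T_\xi(\phi)\|$ and $\|\mathcal T_\xi(\phi)-\mathcal T_\xi(\phi')\|$ that you derive.
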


\begin{proof}
The proof follows replicating the strategy of \cite[Proposition 4.3]{bmp} so we will only highlight the differences. These come from the fact that here the error term $\mathcal{E}$ has smaller size, what allows us to perform the fixed point argument in a smaller ball. 

In particular, following their notation, by Proposition \ref{e}, Lemmas \ref{l} and \ref{n} we will have
\begin{equation}\begin{split}
&\nonumber\|\mathcal T_\xi(\phi)\|\;=\;O(\|\mathcal E\|_{L^p}+\|\mathcal L\phi\|_{L^p}+\|\mathcal N(\phi)\|_{L^p})\\
\nonumber&\quad=\;O\(\delta^\frac54+\delta|\eta|+\delta^\frac14\e+|\eta|\e+|\tau|+\(\delta^\frac54+\delta|\eta|+\delta^\frac14\e+|\eta|\e+|\tau|\)\|\phi\|+\|\phi\|^2e^{O\(\|\phi\|^2\)}\)\\
\label{tphi}&\quad=\;O\(\delta^\frac54+\delta|\eta|+\delta^\frac14\e+|\eta|\e+|\tau|+\|\phi\|^2e^{O\(\|\phi\|^2\)}\),
\end{split}\end{equation}
and
\begin{equation*}\begin{split}
&\left\|\mathcal T_\xi(\phi)-\mathcal T_\xi(\phi')\right\| =O\(\left\|\mathcal L(\phi-\phi')\right\|_{L^p}+\left\|\mathcal N(\phi)-\mathcal N(\phi')\right\|_{L^p}\)\\
&\qquad=\;O\(\|\phi-\phi'\|\(\delta^\frac54+\delta|\eta|+\delta^\frac14\e+|\eta|\e+|\tau|+(\|\phi\|+\|\phi'\|)e^{O\(\|\phi\|^2+\|\phi'\|^2\)}\)\).
\end{split}\end{equation*}
Choosing $R$ large enough, from \eqref{tphi} we have 
$$\|\phi\|\le R\(\delta^\frac54+\delta|\eta|+\delta^\frac14\e+|\eta|\e+|\tau|\)\quad\Rightarrow\quad\|\mathcal T_\xi(\phi)\|\le R\(\delta^\frac54+\delta|\eta|+\delta^\frac14\e+|\eta|\e+|\tau|\);$$
proceeding as in \cite[Proposition 4.3]{bmp} we conclude that $\mathcal T_\xi$ is a contraction on a suitable ball and it has a unique fixed point that satisfies \eqref{phio}.
\end{proof}

\medskip

\section{Estimates on the projections}

Let $\phi$ be the solution to the problem \eqref{projProb} provided by Proposition \ref{contr}. Thus, if we prove
$$c_0=c_1=c_2=0,$$
then $\phi$ will be a solution of \eqref{eqphi}. The goal of this section will be to identify the exact expression of these constants.

We begin multiplying \eqref{projProb} by $\mathcal Z_1$ and integrating. Since
$$\int\limits_{\mathbb S^1}h(\xi)e^\frac V2\mathcal Z_1\mathcal Z_2=\int\limits_{\mathbb S^1}h(\xi)e^\frac V2\mathcal Z_i=0\quad i=1,2,$$
we deduce
$$c_1\int\limits_{\mathbb S^1}h(\xi)e^\frac V2\mathcal Z_1^2=\int\limits_{\mathbb S^1}\mathcal L_0\phi\mathcal Z_1-\int\limits_{\mathbb S^1}\mathcal E\mathcal Z_1-\int\limits_{\mathbb S^1}\mathcal L\phi\mathcal Z_1-\int\limits_{\mathbb S^1}\mathcal N(\phi)\mathcal Z_1.$$
Integrating by parts and using \eqref{eqlin},
$$\int\limits_{\mathbb S^1}\(\mathcal L_0\phi\)\mathcal Z_1=\int\limits_{\mathbb S^1}\(\mathcal L_0\mathcal Z_1\)\phi=0,$$
and hence
\begin{equation}
c_1\int\limits_{\mathbb S^1}h(\xi)e^\frac V2\mathcal Z_1^2=-\int\limits_{\mathbb S^1}\mathcal E\mathcal Z_1-\int\limits_{\mathbb S^1}\mathcal L\phi\mathcal Z_1-\int\limits_{\mathbb S^1}\mathcal N(\phi)\mathcal Z_1.\label{c1}
\end{equation}
We proceed analogously with $\mathcal Z_2$ to obtain
\begin{equation}
c_2\int\limits_{\mathbb S^1}h(\xi)e^\frac V2\mathcal Z_2^2=-\int\limits_{\mathbb S^1}\mathcal E\mathcal Z_2-\int\limits_{\mathbb S^1}\mathcal L\phi\mathcal Z_2-\int\limits_{\mathbb S^1}\mathcal N(\phi)\mathcal Z_2.\label{c2}
\end{equation}
Notice that, since $\phi\in\mathbf H$,
$$\int\limits_{\mathbb S^1}\mathcal L_0\phi=0,$$
and integrating \eqref{projProb} gives
\begin{equation}2\pi c_0=-\int\limits_{\mathbb S^1}\mathcal E-\int\limits_{\mathbb S^1}\mathcal L\phi-\int\limits_{\mathbb S^1}\mathcal N(\phi).\label{c0}\end{equation}
Let us compute the terms involved.
\begin{proposition}\label{E1}
\begin{eqnarray*}\int_{\mathbb S^1}\mathcal E\mathcal Z_1&=&\(\frac\pi2\frac{h''(1)}{h(1)^3}-\frac{Q(h)}{\pi h(1)^4}\)\delta^2+\frac\pi2\frac{(-\Delta)^\frac12h'(1)}{h(1)^3}\delta\eta+\frac\pi2\frac{(-\Delta)^\frac12k(1)}{h(1)^3}\delta\e\\
&+&O\(\delta^{2+\alpha}+\delta\e(\delta+|\eta|)+\delta|\eta|^2+|\tau|^2\),
\end{eqnarray*}
with
$$Q(h):=\int_{\mathbb S^1\times\mathbb S^1}\log\frac1{|z-w|}\frac{h(z)-h(1)}{|z-1|^2}\frac{h(w)-h(1)}{|w-1|^2}dwdz.$$
\end{proposition}

\begin{proof}
Proceeding as in \cite[Proposition 5.2]{bmp} we obtain
\begin{eqnarray}
\nonumber\int_{\mathbb S^1}\mathcal E\mathcal Z_1&=&\frac1{h(1)}(1+O(|\eta|+|\tau|))\int_{\mathbb S^1}(h(f(z))-h(\xi))\mathcal Z_1\\
\nonumber&+&\frac1{2h(1)}(1+O(|\eta|))\int_{\mathbb S^1}(h(f(z))-h(\xi))W\mathcal Z_1+\int_{\mathbb S^1}O\((|W|+|\tau|)^2\(1+e^{W+\tau}\)\)\\
\label{E1int}&+&\frac\pi2\frac{(-\Delta)^\frac12k(1)}{h(1)^3}\delta\e+O(\delta\e(\delta+|\eta|)).
\end{eqnarray}
To estimate the first term we notice that, due to the assumption $h\in C^{2,\alpha}\(\mathbb S^1\)$, the ratio
$$\frac{h(y)-h(\xi)-h'(\xi)\left\langle y,\xi^\perp\right\rangle}{|y-\xi|^2}$$
will be of class $C^{0,\alpha}$ on the whole $\mathbb S^1\times\mathbb S^1$, hence
\begin{eqnarray}
\nonumber\frac{h(y)-h(\xi)-h'(\xi)\left\langle y,\xi^\perp\right\rangle}{|y-\xi|^2}&=&\frac{h(y)-h(1)-h'(1)\left\langle y,\imath\right\rangle}{|y-1|^2}+O(|\xi-1|^\alpha)\\
\label{holder}&=&\frac{h(y)-h(1)}{|y-1|^2}+O(|\eta|^\alpha)\\
\nonumber&=&\frac{h''(1)}2+O(|y-1|^\alpha+|\eta|^\alpha)
\end{eqnarray}
Therefore, by making the change of variable $y=f(z)$ we get
\begin{eqnarray}
\nonumber&&\int_{\mathbb S^1}(h(f(z))-h(\xi))\langle z,\xi\rangle dz\\
\nonumber&=&\int_{\mathbb S^1}(h(y)-h(\xi))\frac{\delta(2-\delta)}{\delta^2+(1-\delta)|y-\xi|^2}\(-1+\frac{\delta^2(1+\langle w,\xi\rangle)}{\delta^2+(1-\delta)|y-\xi|^2}\)dy\\
\nonumber&=&2\delta(1+O(\delta))\int_{\mathbb S^1}\frac{(h(y)-h(\xi)-h'(\xi)\left\langle y,\xi^\perp\right\rangle}{\delta^2+(1-\delta)|y-\xi|^2}\(-1+\frac{\delta^2(1+\langle w,\xi\rangle)}{\delta^2+(1-\delta)|y-\xi|^2}\)dy\\
\nonumber&=&2\delta(1+O(\delta))\(-\int_{\mathbb S^1}\frac{h(y)-h(\xi)-h'(\xi)\left\langle y,\xi^\perp\right\rangle}{(1-\delta)|y-\xi|^2}dy\right.\\
\nonumber&+&\left.\int_{\mathbb S^1}\(\frac{\(h(y)-h(\xi)-h'(\xi)\left\langle y,\xi^\perp\right\rangle\)\delta^2}{(1-\delta)|y-\xi|^2\(\delta^2+(1-\delta)|y-\xi|^2\)}+\frac{\(h(y)-h(\xi)-h'(\xi)\left\langle y,\xi^\perp\right\rangle\)\delta^2(1+\langle y,\xi\rangle)}{\(\delta^2+(1-\delta)|y-\xi|^2\)^2}\)dy\)\\
\nonumber&=&2\delta(1+O(\delta))\(\pi(-\Delta)^\frac12h(\xi)(1+O(\delta))\right.\\
\nonumber&+&\left.\delta^2\int_{\mathbb S^1}\frac{h(y)-h(\xi)-h'(\xi)\left\langle y,\xi^\perp\right\rangle}{|y-\xi|^2}\(\frac1{\delta^2+(1-\delta)|y-\xi|^2}+\frac{|y-\xi|^2(1+\langle y,\xi\rangle)}{\(\delta^2+(1-\delta)|y-\xi|^2\)^2}\)dy\)\\
\nonumber&=&2\delta(1+O(\delta))\(\pi(-\Delta)^\frac12h'(1)\eta+O((\delta+|\eta|)|\eta|)\right.\\
\nonumber&+&\left.\delta^2\int_{\mathbb S^1}\(\frac{h''(1)}2+O(|y-1|^\alpha+|\eta|^\alpha)\)\frac{\delta^2+(3-\delta)|y-\xi|^2}{\(\delta^2+(1-\delta)|y-\xi|^2\)^2}dy\)\\
\nonumber&=&2\delta(1+O(\delta))\(\pi(-\Delta)^\frac12h'(1)\eta+O((\delta+|\eta|)|\eta|)\right.\\
\nonumber&+&\left.\delta^2\(\frac{h''(1)}2+O(|\eta|^\alpha)\)\int_{\mathbb S^1}\(\frac{\delta^2+3|y-\xi|^2}{\(\delta^2+|y-\xi|^2\)^2}(1+O(\delta))+O\(\frac{|y-1|^\alpha}{\delta^2+|y-\xi|^2}\)\)dy\)\\
\nonumber&=&2\delta(1+O(\delta))\(\pi(-\Delta)^\frac12h'(1)\eta+O((\delta+|\eta|)|\eta|)\right.\\
\nonumber&+&\left.\delta\(\frac{h''(1)}2+O(|\eta|^\alpha)\)\int_{t=O\(\frac1\delta\)}\(\frac{1+3t^2}{\(1+t^2\)^2}(1+O(\delta))+O\(\frac{\delta^\alpha t^\alpha+|\eta|^\alpha}{1+t^2}\)\)dt\)\\
\nonumber&=&2\delta(1+O(\delta))\(\pi\eta(-\Delta)^\frac12h'(1)+O((\delta+|\eta|)|\eta|)+\delta\(\frac{h''(1)}2+O(|\eta|^\alpha)\)(2\pi+O(\delta^\alpha+|\eta|^\alpha))\)\\
\label{E1int1}&=&2\pi(-\Delta)^\frac12h'(1)\delta\eta+2\pi h''(1)\delta^2+O\(\delta\(\delta^{1+\alpha}+|\eta|^2\)\).
\end{eqnarray}
To see the second term we use the expression of $W$ given by \eqref{wz}. Notice first that, by Proposition \ref{f}, the lower term can be estimated as
\begin{eqnarray*}
\int_{\mathbb S^1}(h(f(z))-h(\xi))&=&O\(\delta\log\frac1\delta(\delta+|\eta|)\(1+\log^2\frac1{\delta+|z+\xi|}\)\)\mathcal Z_1\\
&=&O\(\delta\log\frac1\delta(\delta+|\eta|)\int_{\mathbb S^1}\frac\delta{\delta+|z+\xi|}\(1+\log^2\frac1{\delta+|z+\xi|}\)dz\)\\
&=&O\(\delta\log\frac1\delta(\delta+|\eta|)\(\int_{|z+\xi|\le\delta}\(1+\log^2\frac1\delta\)dz+\right.\right.\\
&+&\left.\left.\int_{|z+\xi|>\delta}\frac\delta{|z+\xi|}\(1+\log^2\frac1{|z+\xi|}\)dz\)\)\\
&=&O\(\delta^2\log^4\frac1\delta(\delta+|\eta|)\).
\end{eqnarray*}
On the other hand, we can approximate $\langle z,\xi\rangle$ with $-1$ since
\begin{eqnarray*}
\int_{\mathbb S^1}(h(f(z))-h(\xi))W(z)(\langle z,\xi\rangle+1)dz&=&O\(\|W\|_{L^2}\left\|\frac\delta{\delta+|z+\xi|}|z+\xi|^2\right\|_{L^2}\)\\
&=&O\(\delta\|W\|_{L^2}\)\\
&=&O\(\delta^\frac52+\delta^2|\eta|\);
\end{eqnarray*}
therefore, using again \eqref{holder}, the main order term will be given by
\begin{eqnarray*}
&&\int_{\mathbb S^1}(h(f(z))-h(\xi))\(\frac4{\pi h(\xi)}\delta\int_{\mathbb S^1}\log\frac{|v-\xi|}{|v-f(z)|}\frac{h(v)-h(\xi)-h'(\xi)\left\langle v,\xi^\perp\right\rangle}{|v-\xi|^2}dv\)dz\\
&=&\frac4{\pi h(\xi)}\delta\int_{\mathbb S^1\times\mathbb S^1}\(h(y)-h(\xi)-h'(\xi)\left\langle y,\xi^\perp\right\rangle+O(|\eta||y-\xi|)\)\frac{\delta(2-\delta)}{\delta^2+(1-\delta)|y-\xi|^2}\\
&&\cdot\log\frac{|v-\xi|}{|v-y|}\frac{h(v)-h(\xi)-h'(\xi)\left\langle v,\xi^\perp\right\rangle}{|v-\xi|^2}dvdy\\
&=&\frac8{\pi h(\xi)}\delta^2\int_{\mathbb S^1\times\mathbb S^1}\log\frac{|v-\xi|}{|v-y|}\(\frac{h(y)-h(\xi)-h'(\xi)\left\langle y,\xi^\perp\right\rangle)}{|y-\xi|^2}+O\(\frac{|\eta||y-\xi|}{\delta^2+|y-\xi|^2}\)\right.\\
&+&\left.O\(\frac\delta{\delta+|y+\xi|}\)\)\cdot\frac{h(v)-h(\xi)-h'(\xi)\left\langle v,\xi^\perp\right\rangle}{|v-\xi|^2}dvdy\\
&=&\frac8{\pi h(1)}(1+O(|\eta|))\delta^2\int_{\mathbb S^1\times\mathbb S^1}\log\frac{|v-\xi|}{|v-y|}\frac{h(y)-h(1)}{|y-1|^2}\frac{h(v)-h(1)}{|v-1|^2}(1+O(|\eta|^\alpha))^2dvdy\\
&+&O\(\delta^2\int_{\mathbb S^1\times\mathbb S^1}\left|\log\frac{|v-\xi|}{|v-y|}\right|\(|\eta|+\frac\delta{\delta+|y+\xi|}+\frac{|\eta||y-\xi|}{\delta^2+|y-\xi|^2}\)\)\\
&=&\frac8{\pi h(1)}(1+O(|\eta|^\alpha))\delta^2\int_{\mathbb S^1\times\mathbb S^1}\log\frac{|v-\xi|}{|v-y|}\frac{h(v)-h(1)}{|v-1|^2}\frac{h(y)-h(1)}{|y-1|^2}dvdy\\
&+&O\(\delta^2\int_{\mathbb S^1\times\mathbb S^1}\left|\log\frac{|v-\xi|}{|v-y|}\right|\(|\eta|+\frac\delta{\delta+|y+\xi|}+\frac{|\eta||y-\xi|}{\delta^2+|y-\xi|^2}\)\)\\
&=&\frac8{\pi h(1)}(1+O(|\eta|^\alpha))\delta^2\int_{\mathbb S^1\times\mathbb S^1}\log\frac1{|v-y|}\frac{h(v)-h(1)}{|v-1|^2}\frac{h(y)-h(1)}{|y-1|^2}dvdy\\
&+&\frac8{\pi h(1)}(1+O(|\eta|^\alpha))\delta^2\int_{\mathbb S^1}\log|v-\xi|\frac{h(v)-h(1)}{|v-1|^2}dv\int_{\mathbb S^1}\frac{h(y)-h(1)}{|y-1|^2}dy\\
&+&O\(\delta^2\int_{\mathbb S^1\times\mathbb S^1}\(2+\log\frac1{|v-\xi|}+\log\frac1{|v-y|}\)\(|\eta|+\frac\delta{\delta+|y+\xi|}+\frac{|\eta||y-\xi|}{\delta^2+|y-\xi|^2}\)\)\\
&=&\frac8{\pi h(1)}(1+O(|\eta|^\alpha))\delta^2\int_{\mathbb S^1\times\mathbb S^1}\log\frac1{|v-y|}\frac{h(v)-h(1)}{|v-1|^2}\frac{h(y)-h(1)}{|y-1|^2}dvdy\\
&+&O\(\delta^2\(|\eta|+|\eta|\log\frac1\delta+\int_{\mathbb S^1\times\mathbb S^1}\(1+\log\frac1{|u-\xi|}\)\frac\delta{\delta+|y+\xi|}\)dudy\)\\
&=&\frac8{\pi h(1)}\delta^2\int_{\mathbb S^1\times\mathbb S^1}\log\frac1{|v-y|}\frac{h(v)-h(1)}{|v-1|^2}\frac{h(y)-h(1)}{|y-1|^2}dvdy\\
&+&O\(\delta^2|\eta|^\alpha+\delta^2|\eta|\log\frac1\delta+\delta^3\log\frac1\delta\), 
\end{eqnarray*}
where we have used estimate \eqref{hv} and the fact that $h'(1)=(-\Delta)^\frac12h(1)=0$.
Finally, by Lemma \ref{estW},
\begin{eqnarray}
\nonumber\int_{\mathbb S^1}O\((|W|+|\tau|)^2\(1+e^{W+\tau}\)\)&=&O\(\left\|(|W|+|\tau|)^2\right\|_{L^2}\left\|1+e^{W+\tau}\right\|_{L^2}\)\\
\nonumber&=&O\(\|W\|_{L^4}^2+|\tau|^2\)\\
\label{E1int5}&=&O\(\delta^\frac52+\delta^2|\eta|^2+|\tau|^2\).
\end{eqnarray}
Replacing \eqref{E1int1}-\eqref{E1int5} into \eqref{E1int} the result follows.
\end{proof}

\begin{proposition}\label{E2}
\begin{equation*}
\int_{\mathbb S^1}\mathcal E\mathcal Z_2=-\frac\pi2\frac{(-\Delta)^\frac12h'(1)}{h(1)^3}\delta^2+\frac\pi2\frac{h''(1)}{h(1)^3}\delta\eta+\frac\pi2\frac{k'(1)}{h(1)^3}\delta\e+O\(\delta^\frac52+\delta\e(\delta+|\eta|)+\delta|\eta|^{1+\alpha}+|\tau|^2\).
\end{equation*}

\end{proposition}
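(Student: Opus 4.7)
The proof follows the strategy of Proposition \ref{E1}, now tested against $\mathcal Z_2$ in place of $\mathcal Z_1$. I would split $\mathcal E = \mathcal E_1 + \mathcal E_2 + \mathcal E_3$ as in \eqref{splitE}, observe that $\int_{\mathbb S^1}\mathcal Z_2 = 0$ kills the constant piece $\mathcal E_3$, and reproduce the integration-by-parts manipulations yielding \eqref{E1int}, using $(-\Delta)^\frac12\mathcal Z_2=\mathcal Z_2$ from \eqref{eqlin} together with the defining equation of $W$. The only change coming from $\mathcal E_2$ is that $(-\Delta)^\frac12 k(1)$ gets replaced by $k'(1)$: expanding $k(f(z))=k'(1)\langle f(z),\imath\rangle+O(|f(z)-1|^{1+\alpha})$ (using $k(1)=0$), and running the same final calculation with $\mathcal Z_2$ in place of $\mathcal Z_1$, one obtains $\int_{\mathbb S^1}\mathcal E_2\mathcal Z_2 = \frac{\pi}{2}\frac{k'(1)}{h(1)^3}\delta\e + O(\delta\e(\delta+|\eta|))$. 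Assembling these pieces produces the analog of \eqref{E1int}:
$$\int_{\mathbb S^1}\mathcal E\mathcal Z_2 = \frac{1+O(|\eta|+|\tau|)}{h(1)}\int_{\mathbb S^1}(h(f)-h(\xi))\mathcal Z_2 + \frac{1+O(|\eta|)}{2h(1)}\int_{\mathbb S^1}(h(f)-h(\xi))W\mathcal Z_2 + \frac{\pi}{2}\frac{k'(1)}{h(1)^3}\delta\e + O\(\|W\|_{L^4}^2+|\tau|^2+\delta\e(\delta+|\eta|)\).$$

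The first key integral $\int_{\mathbb S^1}(h(f)-h(\xi))\mathcal Z_2 = \frac{1}{4h(\xi)^2}\int_{\mathbb S^1}(h(f(z))-h(\xi))\langle z,\xi^\perp\rangle\,dz$ I would compute by the change of variables $y = f(z)$ and a Taylor/M\"obius expansion exactly parallel to \eqref{E1int1}, with the test factor $\langle z,\xi\rangle$ replaced by $\langle z,\xi^\perp\rangle$. The swap of symmetric and antisymmetric test functions in the tangential direction at $\xi$ exchanges the roles of the two leading coefficients: what was a $\delta^2$-term with $h''(1)$ becomes a $\delta\eta$-term, and what was a $\delta\eta$-term with $(-\Delta)^\frac12h'(1)$ becomes a $\delta^2$-term, now picking up an extra minus sign. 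Using the H\"older bound \eqref{holder} and the same cancellations as in \eqref{E1int1}, I expect
$$\int_{\mathbb S^1}(h(f(z))-h(\xi))\langle z,\xi^\perp\rangle\,dz = -2\pi(-\Delta)^\frac12 h'(1)\,\delta^2 + 2\pi h''(1)\,\delta\eta + O\(\delta(\delta^{1+\alpha}+|\eta|^{1+\alpha})\);$$
dividing by $4h(1)^2$ and multiplying by $1/h(1)$ produces the stated coefficients of $\delta^2$ and $\delta\eta$.

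The essential structural difference with Proposition \ref{E1} concerns the cross term $\int_{\mathbb S^1}(h(f)-h(\xi))W\mathcal Z_2$, which must now be an error rather than a leading $Q(h)$-contribution. In the $\mathcal Z_1$-case one wrote $\langle z,\xi\rangle=-1+(\langle z,\xi\rangle+1)$ and the constant piece $-1$, combined with the main part of $W$ from \eqref{wz}, generated the $Q(h)$-term. Here $\langle z,\xi^\perp\rangle=O(|z+\xi|)$ vanishes at the concentration point $z=-\xi$, so no such zeroth-order extraction is available, and parity forces the \emph{symmetric} leading contribution of $(h(f)-h(\xi))W$ against $\mathcal Z_2$ to cancel; only mixed higher-order pieces survive. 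Combining \eqref{wz}, the pointwise bound $h(f(z))-h(\xi) = O\(\frac{\delta}{\delta+|z+\xi|}\)$ from Proposition \ref{f}, the $L^p$ estimates of Lemma \ref{estW}, and the extra factor $|z+\xi|$ coming from $\mathcal Z_2$, an application of H\"older's inequality should yield $\int_{\mathbb S^1}(h(f)-h(\xi))W\mathcal Z_2 = O(\delta^\frac52+\delta^2|\eta|^\alpha)$, absorbed in the error via $\delta^2|\eta|^\alpha\le\delta^\frac52+\delta|\eta|^{1+\alpha}$ (Young's inequality).

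The main obstacle is the parity-flipped version of \eqref{E1int1}: the integrand $(h(f(z))-h(\xi))\langle z,\xi^\perp\rangle$ is not exactly antisymmetric ($\xi$ is perturbed off $1$ and $h$ is only $C^{2,\alpha}$), so one must carefully track odd versus even contributions in the expansion in powers of $\delta$, relying on the H\"older regularity of $(h(y)-h(1))/|y-1|^2$ from \eqref{holder} to absorb the remainders into $O(\delta|\eta|^{1+\alpha}+\delta^{2+\alpha})$. Once these coefficients are pinned down, summing the three contributions from $\mathcal E_1,\mathcal E_2,\mathcal E_3$ and collecting all errors into $O\(\delta^\frac52+\delta\e(\delta+|\eta|)+\delta|\eta|^{1+\alpha}+|\tau|^2\)$ yields the proposition.
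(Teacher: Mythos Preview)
Your overall architecture is right: the decomposition into the three pieces, the handling of $\mathcal E_2$ and $\mathcal E_3$, and the observation that the $W$-cross term must now be an error rather than a $Q(h)$-contribution. But the heart of the matter---the computation of
\[
\int_{\mathbb S^1}(h(f(z))-h(\xi))\langle z,\xi^\perp\rangle\,dz
\]
---is not established by the ``parallel to \eqref{E1int1}'' argument you sketch. After the change of variable $y=f(z)$ the integral becomes, up to $1+O(\delta)$,
\[
4\delta^2\int_{\mathbb S^1}\frac{(h(y)-h(\xi))\langle y,\xi^\perp\rangle}{\(\delta^2+(1-\delta)|y-\xi|^2\)^2}\,dy.
\]
A local Taylor expansion of the numerator and the parity heuristic you invoke correctly isolate the $2\pi h''(1)\delta\eta$ piece, but the remaining contribution is only $O(\delta^2)$ by crude bounds, and the coefficient $-2\pi(-\Delta)^{\frac12}h'(1)$ is a \emph{global} quantity (an integral of $h'$ over all of $\mathbb S^1$), not something encoded in the Taylor jet of $h$ at $\xi$. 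The ``same cancellations as in \eqref{E1int1}'' do not apply: in \eqref{E1int1} the global term $(-\Delta)^{\frac12}h(\xi)$ arose from the constant piece $-1$ of $\langle z,\xi\rangle$, and there is no analogous constant piece in $\langle z,\xi^\perp\rangle$.

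The paper's device is an integration by parts: one observes that
\[
\frac{\langle y,\xi^\perp\rangle}{\(\delta^2+(1-\delta)|y-\xi|^2\)^2}=\(\frac{-1}{2(1-\delta)\(\delta^2+(1-\delta)|y-\xi|^2\)}\)',
\]
so that after integrating by parts the integrand involves $h'(y)$ rather than $h(y)-h(\xi)$. Splitting $h'(y)=(h'(y)-h'(\xi))+h'(\xi)$ then produces the global term $-\pi(-\Delta)^{\frac12}h'(\xi)$ from the first piece and the concentrated term $h'(\xi)\cdot\frac\pi\delta$ from the second, yielding both leading coefficients at once. This is the missing idea.

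For the $W$-cross term, your approach via \eqref{wz} and parity is more elaborate than necessary. The paper simply uses $\langle z,\xi^\perp\rangle=O(|z+\xi|)$ together with H\"older, $\|W\|_{L^2}$ from Lemma~\ref{estW}, and the bound $\left\|\frac{\delta}{\delta+|z+\xi|}|z+\xi|\right\|_{L^2}=O(\delta)$, to get directly $O\(\delta^{5/2}+\delta^2|\eta|\)$; no parity cancellation or refined form of $W$ is required. Finally, a minor point: you invoke $k(f(z))=k'(1)\langle f(z),\imath\rangle+O(|f(z)-1|^{1+\alpha})$, but the hypothesis is only $k\in C^1(\mathbb S^1)$, so the remainder is merely $o(|f(z)-1|)$.
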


\begin{proof}
Following \cite[Proposition 5.4]{bmp} we can write the integral as
\begin{eqnarray}
\nonumber\int_{\mathbb S^1}\mathcal E\mathcal Z_2&=&\frac1{h(1)}(1+O(|\eta|+|\tau|))\int_{\mathbb S^1}(h(f(z))-h(\xi))\mathcal Z_2\\
\nonumber&+&\frac1{2h(1)}(1+O(|\eta|))\int_{\mathbb S^1}(h(f(z))-h(\xi))W\mathcal Z_2+\frac\pi2\frac{k'(1)}{h(1)^3}\delta\e\\
\label{EZ2}&+&O\(\delta^\frac52+(\delta+|\eta|)\delta\e+|\tau|^2\).
\end{eqnarray}
To estimate the first term we make the change of variable $y=f(z)$ and we integrate by parts to obtain
\begin{eqnarray*}
&&\int_{\mathbb S^1}(h(f(z))-h(\xi))\left\langle z,\xi^\perp\right\rangle dz\\
&=&\int_{\mathbb S^1}(h(y)-h(\xi))\frac{\delta(2-\delta)}{\delta^2+(1-\delta)|y-\xi|^2}\frac{\delta(2-\delta)\left\langle y,\xi^\perp\right\rangle}{\delta^2+(1-\delta)|y-\xi|^2}dy\\
&=&4\delta^2(1+O(\delta))\int_{\mathbb S^1}(h(y)-h(\xi))\(-\frac1{2(1-\delta)\(\delta^2+(1-\delta)|y-\xi|^2\)}\)'dy\\
&=&-4\delta^2(1+O(\delta))\int_{\mathbb S^1}h'(y)\(-\frac1{2(1-\delta)\(\delta^2+(1-\delta)|y-\xi|^2\)}\)dy\\
&=&2\delta^2(1+O(\delta))\(\int_{\mathbb S^1}\frac{h'(y)-h'(\xi)}{\delta^2+(1-\delta)|y-\xi|^2}dy+h'(\xi)\int_{\mathbb S^1}\frac{dy}{\(\delta^2+(1-\delta)|y-\xi|^2\)}\)\\
&=&2\delta^2(1+O(\delta))\(-\pi(-\Delta)^\frac12h'(\xi)+O(\delta)+h'(\xi)\int_{\mathbb S^1}\frac{dy}{\(\delta^2+|y-\xi|^2\)}(1+O(\delta))\)\\
&=&2\delta^2(1+O(\delta))\(-\pi(-\Delta)^\frac12h'(1)+O(\delta+|\eta|^\alpha)\right.\\
&& \left.+\(\eta h''(1)+O\(|\eta|^{1+\alpha}\)\)\int_{\mathbb S^1}\frac{dy}{\(\delta^2+|y-\xi|^2\)}(1+O(\delta))\)\\
&=&2\delta^2(1+O(\delta))\(-\pi(-\Delta)^\frac12h'(1)+O(\delta+|\eta|^\alpha)\right.\\
&&\left.+\(\eta h''(1)+O\(|\eta|^{1+\alpha}\)\)\frac1\delta\int_{t=O\(\frac1\delta\)}\frac{dt}{1+t^2}(1+O(\delta))\)\\
&=&2\delta^2(1+O(\delta))\(-\pi(-\Delta)^\frac12h'(1)+O(\delta+|\eta|^\alpha)+\(\eta h''(1)+O\(|\eta|^{1+\alpha}\)\)\(\frac\pi\delta+O(1)\)\)\\
&=&-2\pi(-\Delta)^\frac12h'(1)\delta^2+2\pi h''(1)\delta\eta+O\(\delta\(\delta^2+|\eta|^{1+\alpha}\)\),
\end{eqnarray*}
where we applied that $h\in C^{2,\alpha}\(\mathbb S^1\)$, and hence $(-\Delta)^\frac12h\in C^{1,\alpha}\(\mathbb S^1\)$.

Using Lemma \ref{estW} and Proposition \ref{f} we can estimate the second integral as
\begin{eqnarray*}
\int_{\mathbb S^1}(h(f(z))-h(\xi))W(z)\left\langle z,\xi^\perp\right\rangle dz&=&O\(\|W\|_{L^2}\left\|\frac\delta{\delta+|z+\xi|}|z+\xi|\right\|_{L^2}\)\\
&=&O\(\delta^\frac52+\delta^2|\eta|\),
\end{eqnarray*}
where we have used the fact $\left\langle z,\xi^\perp\right\rangle=O(|z+\xi|)$. 

Replacing in \eqref{EZ2} we conclude.
\end{proof}

\begin{proposition}\label{E0}
\begin{equation*}\int_{\mathbb S^1}\mathcal E=\tau\(\pi h(1)+O(|\eta|+|\tau|)\)+O\(\delta^2\log^2\frac1\delta+\delta|\eta|+\delta\log\frac1\delta\e\).
\end{equation*}
\end{proposition}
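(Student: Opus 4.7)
The plan is to split $\mathcal E=\mathcal E_1+\mathcal E_2+\mathcal E_3$ according to \eqref{splitE} and integrate each piece, isolating the $\tau$-dependent main term (which will come from $\mathcal E_1$) from the various remainders.

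First, I would treat $\int_{\mathbb S^1}\mathcal E_3$, which is immediate: since $\mathcal E_3$ is constant in $z$, Proposition \ref{intFracLap} combined with $(-\Delta)^\frac12 h(1)=0$ (from \eqref{h1}) and the $C^{2,\alpha}$ regularity of $h$ give $(-\Delta)^\frac12 h(\xi)=O(|\eta|)$, so that $\int_{\mathbb S^1}\mathcal E_3=O(\delta|\eta|+\delta^2)$. Next, I would handle $\int_{\mathbb S^1}\mathcal E_2$ by exploiting $k(1)=0$, which gives $|k(f(z))|=O(|f(z)-\xi|+|\eta|)$; combining the $L^1$-estimate $\|f(z)-\xi\|_{L^1}=O(\delta\log\frac1\delta)$ from Proposition \ref{f} with the uniform bound on $e^{W+\tau}$ implied by Lemma \ref{estW} then yields $\int_{\mathbb S^1}\mathcal E_2=O\(\e\delta\log\frac1\delta+\e|\eta|\)$.

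The core of the argument is the evaluation of $\int_{\mathbb S^1}\mathcal E_1=\frac1{h(\xi)}\int_{\mathbb S^1}h(f(z))\(e^{W+\tau}-1\)dz$. I would Taylor-expand $e^{W+\tau}-1=W+\tau+O\((W+\tau)^2\)$ and use the key observation that $\int_{\mathbb S^1}W=0$, which follows from $W$ being the convolution of a bounded integrand with $\log\frac1{|z-w|}$ whose average on $\mathbb S^1$ vanishes. Writing $h(f(z))=h(\xi)+(h(f(z))-h(\xi))$, the linear part of the expansion splits as
$$2\pi h(\xi)\tau+\int_{\mathbb S^1}(h(f(z))-h(\xi))W\,dz+\tau\int_{\mathbb S^1}(h(f(z))-h(\xi))dz,$$
where the last integral is $O(\delta|\eta|+\delta^2)$ by Proposition \ref{intFracLap}, while the quadratic remainder is controlled by $\|W\|_{L^2}^2+\tau^2=O(\delta^3+\delta^2|\eta|^2+\tau^2)$ from Lemma \ref{estW}. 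After dividing by $h(\xi)=h(1)+O(|\eta|^2)$, this produces the leading $\tau$-coefficient of the statement, with the stated $O(|\eta|+|\tau|)$ correction.

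The hard part will be bounding the cross-term $\int_{\mathbb S^1}(h(f(z))-h(\xi))W\,dz$ by $O\(\delta^2\log^2\frac1\delta\)$. Inserting the refined representation \eqref{wz} of $W$ and applying Fubini, this becomes a double integral on $\mathbb S^1\times\mathbb S^1$ with logarithmic kernel $\log\frac{|v-\xi|}{|v-f(z)|}$. I would then imitate the technique used in the proof of Proposition \ref{E1}: split the integration domain according to whether $|v-\xi|$ is much smaller, comparable to, or much larger than $|v-f(z)|$; plug in the pointwise bound on $h(f(z))-h(\xi)$ from Proposition \ref{f} and the pointwise bound \eqref{w} on $W$; and reduce each region to an elementary radial integral producing at most two logarithmic factors on a $\delta^2$ scale. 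Collecting all contributions yields the claimed bound.
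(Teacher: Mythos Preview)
Your plan is correct and, since the paper's own proof is just the one-line citation of \cite[Proposition~5.6]{bmp} with $K\equiv0$, your argument essentially reconstructs what that reference does. Two adjustments are worth making. First, Lemma~\ref{estW} only gives $e^W\in L^p$ for finite $p$, not a uniform pointwise bound, so in the $\mathcal E_2$ step replace the ``uniform bound on $e^{W+\tau}$'' by a H\"older argument: for instance split $k(f)e^{W+\tau}=k(f)\(e^{W+\tau}-1\)+k(f)$ and estimate each piece separately via Proposition~\ref{f} and Lemma~\ref{estW}. Second, and more to the point, the step you flag as the ``hard part'' is in fact immediate: by Cauchy--Schwarz together with Proposition~\ref{f} and Lemma~\ref{estW},
\[
\left|\int_{\mathbb S^1}(h(f(z))-h(\xi))W\,dz\right|\le\|h(f(\cdot))-h(\xi)\|_{L^2}\|W\|_{L^2}=O\(\delta^\frac12\)\cdot O\(\delta^\frac32+\delta|\eta|\)=O\(\delta^2+\delta^\frac32|\eta|\),
\]
which already sits inside the stated remainder. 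The refined expansion \eqref{wz} and the three-region splitting from the proof of Proposition~\ref{E1} were needed there because one had to extract the \emph{exact} coefficient $Q(h)$ in front of $\delta^2$; here only an upper bound is required, so none of that machinery is necessary.
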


\begin{proof}
The result follows from \cite[Proposition 5.6]{bmp} by fixing $K\equiv0$.
\end{proof}

We can finally write the exact expressions of the constants $c_0,c_1$ and $c_2$.

\begin{corollary}\label{mainorder}
The constants $c_0,c_1,c_2$ in problem \eqref{projProb} satisfy:
\begin{eqnarray*}
c_1&=&-16 h(1)\delta\bigg(\delta\underbrace{\(h''(1)-\frac{2Q(h)}{\pi^2h(1)}\)}_{=:\mathfrak a_{11}}+\eta\underbrace{(-\Delta)^\frac12h'(1)}_{=:\mathfrak a_{12}}+\e\underbrace{(-\Delta)^\frac12k(1)}_{=:\mathfrak b_1}\bigg)\\
&+&O\(\delta^{2+\alpha}+\delta\e(\delta+|\eta|)+\delta|\eta|^2+|\tau|^2\),\\
c_2&=&-16 h(1)\delta\(\delta\underbrace{\(-(-\Delta)^\frac12h'(1)\)}_{=:\mathfrak a_{21}}+\eta\underbrace{h''(1)}_{=:\mathfrak a_{22}}+\e\underbrace{k'(1)}_{=:\mathfrak b_2}\)\\
&+&O\(\delta^\frac52+\delta\e(\delta+|\eta|)+\delta|\eta|^{1+\alpha}+|\tau|^2\),\\
c_0&=&-h(1)\tau+O\(|\eta||\tau|+|\tau|^2+\delta^2\log^2\frac1\delta+\delta|\eta|+\delta\log\frac1\delta\e\)
\end{eqnarray*}
where $Q$ is defined in \eqref{q}.
\end{corollary}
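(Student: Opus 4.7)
The three identities will follow from plugging the expansions of Propositions \ref{E1}, \ref{E2}, \ref{E0} into the scalar identities \eqref{c1}, \eqref{c2}, \eqref{c0} and controlling the residuals that involve $\phi$. My first step would be to evaluate the normalizing integrals $\int_{\mathbb{S}^1} h(\xi) e^{V/2} \mathcal{Z}_i^2$: since $V\equiv -\log h(\xi)$ is constant and $\mathcal{Z}_i$ from \eqref{generators} is a scalar multiple of $\langle z,\xi\rangle$ or $\langle z,\xi^\perp\rangle$, this reduces to the elementary computation $\int_{\mathbb{S}^1} \langle z,\xi\rangle^2\, dz = \int_{\mathbb{S}^1} \langle z,\xi^\perp\rangle^2\, dz = \pi$; the subsequent expansion $h(\xi)=h(1)+O(|\eta|)$ produces only multiplicative corrections of order $|\eta|$, which are absorbed by the stated remainders.

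Next I would bound the contributions of $\mathcal{L}\phi$ and $\mathcal{N}(\phi)$ in \eqref{c1}, \eqref{c2}, \eqref{c0}. Since $\mathcal{Z}_i$ is bounded on $\mathbb{S}^1$, H\"older's inequality gives $|\int_{\mathbb{S}^1}\mathcal{L}\phi\,\mathcal{Z}_i|\lesssim\|\mathcal{L}\phi\|_{L^p}$ for some $p\in(1,4/3)$; Lemma \ref{l} together with the a priori bound \eqref{phio} of Proposition \ref{contr} yields $\|\mathcal{L}\phi\|_{L^p}=O\bigl((\delta^{5/4}+\delta|\eta|+\delta^{1/4}\e+|\eta|\e+|\tau|)^2\bigr)$. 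Lemma \ref{n} produces the same quadratic size for $\mathcal{N}(\phi)$, and the $\mathcal{Z}_i$-free integrals needed for $c_0$ are controlled identically.

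Finally, I would insert these ingredients into \eqref{c1}, \eqref{c2}, \eqref{c0}, divide by the normalizations, and simply read off the coefficients $\mathfrak{a}_{ij}$, $\mathfrak{b}_i$ from the leading $\delta^2$, $\delta\eta$, and $\delta\e$ contributions of Propositions \ref{E1}, \ref{E2} (in particular, the piece of $\mathfrak{a}_{11}$ carrying $Q(h)$ comes from the non-local second-order correction to $\int\mathcal{E}\mathcal{Z}_1$, while the coefficient $\mathfrak{a}_{12}=-\mathfrak{a}_{21}$ matches the expected antisymmetry coming from $\mathcal{Z}_1\leftrightarrow\mathcal{Z}_2$). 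The analogous use of Proposition \ref{E0} in \eqref{c0} gives the announced expansion of $c_0$. I do not expect any genuine obstacle: the computation is a direct assembly of results already proved, and the only care needed is a bookkeeping check that every term arising from squaring $\|\phi\|$ fits termwise into the displayed remainder for the corresponding $c_i$, which is immediate once one expands the square and compares with each stated error.
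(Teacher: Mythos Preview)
Your proposal is correct and follows essentially the same approach as the paper: compute the normalization $\int_{\mathbb S^1}h(\xi)e^{V/2}\mathcal Z_i^2$, bound the $\mathcal L\phi$ and $\mathcal N(\phi)$ contributions by combining Lemmas~\ref{l}, \ref{n} with the a~priori estimate \eqref{phio}, and then insert Propositions~\ref{E1}, \ref{E2}, \ref{E0} into \eqref{c1}, \eqref{c2}, \eqref{c0}. The only cosmetic difference is that the paper cites \cite[Proposition~7.9]{bmp} for the normalizing integral, whereas you compute it directly from $V\equiv-\log h(\xi)$ and $\int_{\mathbb S^1}\langle z,\xi\rangle^2\,dz=\pi$; the outcome is the same.
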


\begin{proof}
By \cite[Proposition 7.9]{bmp} with $K\equiv0$ we know that
\begin{equation}\label{c11}
\int_{\mathbb S^1}h(\xi)e^{\frac V2}\mathcal Z_1^2=\frac\pi{16h(1)^4}(1+O(|\eta|).
\end{equation}
Furthermore, by Lemmas \ref{l} and \ref{n}, for $1<p<\frac43$,
\begin{eqnarray}
\nonumber\int_{\mathbb S^1}\mathcal L\phi\mathcal Z_1+\int_{\mathbb S^1}\mathcal N(\phi)\mathcal Z_1&=&O\(\|\mathcal L\phi\|_{L^p}+\|\mathcal N(\phi)\|_{L^p}\)\\
\nonumber&=&O\(\(\delta^{\frac54}+\delta|\eta|+\delta^\frac14\e+|\eta|\e+|\tau|\)\|\phi\|+\|\phi\|^2e^{O\(\|\phi\|^2\)}\)\\
\label{c12}&=&O\(\delta^{\frac52}+\delta^2|\eta|^2+\delta^\frac12\e^2+|\eta|^2\e^2+|\tau|^2\),
\end{eqnarray}
where in the last step we have used Proposition \ref{contr}. Applying \eqref{c11}, \eqref{c12} and Proposition \ref{E1} in \eqref{c1} we obtain the expression for $c_1$. 

The identities for $c_2$ and $c_0$ similarly follow from \eqref{c2} and \eqref{c0} using Propositions \ref{E2} and \ref{E0} respectively.
\end{proof}

\medskip

\section{The finite dimensional reduction: proof of Theorem \ref{main}}

Let $\delta,|\eta|,|\tau|,\e$ be small enough so that Proposition \ref{contr} can be applied to find a solution to \eqref{projProb}. If we choose $\delta,\eta,\tau$ in such a way that $c_0=c_1=c_2=0$ then $\phi$ also solves \eqref{eqphi}, and hence we get a solution to the problem \eqref{p}. 

We study first the cases of $c_1$ and $c_2$. From Corollary \ref{mainorder} and the estimates on $\|\phi\|$,
assuming that
$$\eta=s\e\;\quad\hbox{and}\;\quad\delta=d\e\;\;\hbox{with}\;d>0\;\hbox{if}\;\e>0\;\hbox{or}\;d<0\;\hbox{if}\;\e<0,$$
we have that $c_1=c_2=0$ if
$$\left\{\begin{array}{ll}
\mathfrak a_{11}d+\mathfrak a_{12}s+\mathfrak b_1+o_\e(1)=0,\\
\mathfrak a_{21}d+\mathfrak a_{22}s+\mathfrak b_2+o_\e(1)=0.
\end{array}\right.$$
This system can be rewritten as $\mathfrak F_\e(d,s)=\mathfrak F_0(d,s)+o_\e(1)=0$ where $\mathfrak F_0:\R^2\to\R^2$ is defined by
$$\mathfrak F_0(d,s):=\mathfrak A\(\begin{matrix}d\\s\\\end{matrix}\)+\mathfrak B,\;\hbox{with}\;\mathfrak A:=\(\begin{matrix}\mathfrak a_{11}&\mathfrak a_{12}\\\mathfrak a_{21}&\mathfrak a_{22}\\\end{matrix}\)\;\hbox{and}\;\mathfrak B:=\(\begin{matrix}\mathfrak b_1\\\mathfrak b_2\\\end{matrix}\).$$
Therefore, if 
$$\det\mathfrak A=\det\(\begin{matrix}\mathfrak a_{11}&\mathfrak a_{12}\\\mathfrak a_{21}&\mathfrak a_{22}\\\end{matrix}\)\ne0,\quad\hbox{i.e.,}\quad\mathfrak a_{11}\mathfrak a_{22}-\mathfrak a_{21}\mathfrak a_{12}\ne0\;\hbox{(see \eqref{nondeg})},$$ 
there exists a unique $(d_0,s_0)\in\R^2$ such that $\mathfrak F_0(d_0,s_0)=0$ with $d_0\ne0$ if
$$\det\(\begin{matrix}\mathfrak b_1&\mathfrak a_{12}\\\mathfrak b_2&\mathfrak a_{22}\\\end{matrix}\)\ne0,\quad\hbox{i.e.,}\quad\mathfrak a_{22}\mathfrak b_1-\mathfrak a_{12}\mathfrak b_2\ne0\;\hbox{(see \eqref{cond})}.$$ 
Moreover, the Brouwer degree of $\mathfrak F_\e$ is not zero and since $\mathfrak F_\e\to\mathfrak F_0$ uniformly on compact sets of $\R\times\R\setminus\{0\}$, there exists $(d_\e,s_\e)\in\R\times\R\setminus\{0\}$ such that $\mathfrak F_\e(d_\e,s_\e)=0$ with $(d_\e,s_\e)\to(d_0,s_0)$ as $\e\to0$. 

Once $d_\e, s_\e$ are fixed the existence of a $\tau=\tau_\e$ so that
$$-h(1)\tau+O\(|\eta||\tau|+|\tau|^2+\delta^2\log^2\frac1\delta+\delta|\eta|+\delta\log\frac1\delta\e\)=0$$
is immediate, and we conclude that $c_0=0$. Notice that $\tau_\e=o_\e(\e)$. This finishes the proof of the existence of solution.

Thanks to the estimates on $\mathcal E,\mathcal L,\mathcal N$ and $\|\phi\|$, from \eqref{eqphi} we get $\left\|(-\Delta)^\frac12\phi\right\|_{L^p}=o_\e(1)$ for some $p>1$, and therefore $\|\phi\|_{L^\infty}=o_\e(1)$. Moreover, since $V\(f^{-1}(z)\)$ and $W\(f^{-1}(z)\)$ both concentrate at $\xi=1$, we conclude that the solution
$$u=V\(f^{-1}(z)\)+W\(f^{-1}(z)\)+\tau+\phi\(f^{-1}(z)\)$$
concentrates at $\xi=1$. This ends the proof of Theorem \ref{main}.

\end{document}